\newif\ifcheck
\definecolor{shadecolor}{rgb}{0.3,0.7,0.9}
\tikzset{node distance=2cm, auto}
\theoremstyle{remark}
\newtheorem{example}{Example}[section]
\newtheorem{remark}[example]{Remark}
\theoremstyle{definition}
\newtheorem{definition}[example]{Definition}
\theoremstyle{plain}
\newtheorem{proposition}[example]{Proposition}
\newtheorem{corollary}[example]{Corollary}
\newtheorem{theorem}[example]{Theorem}
\newtheorem{lemma}[example]{Lemma}
\newtcolorbox{highlight}{colback=yellow!20!white, colframe=yellow}
\newtcolorbox{checkbox}{colback=red!5!white,
colframe=red!75!black, fonttitle=\bfseries, title={To be removed}, breakable=true}
\newcommand{\rsa}{\rightsquigarrow}
\newcommand{\trans}[1]{{\lfloor #1 \rfloor}}
\newcommand{\ZZ}{\mathbb{Z}}
\newcommand{\QQ}{\mathbb{Q}}
\newcommand{\sO}{\mathcal{O}}
\newcommand{\fX}{{\mathfrak X}}
\newcommand{\fY}{{\mathfrak Y}}
\newcommand{\spec}{{\rm Spec}}
\renewcommand{\sl}{{\rm SL}}
\newcommand{\et}{{\acute{e}t}}
\DeclareMathOperator{\sch}{Sch}
\DeclareMathOperator{\bA}{{\bf A}}
\newcommand{\cS}{\cancel{S}}
\DeclareMathOperator{\id}{id}
\DeclareMathOperator{\Spec}{Spec}
\DeclareMathOperator{\Br}{Br}
\DeclareMathOperator{\inn}{inn}
\DeclareMathOperator{\Hom}{Hom}
\newextarrow{\xbigtoto}{{20}{20}{20}{20}}
   {\bigRelbar\bigRelbar{\bigtwoarrowsleft\rightarrow\rightarrow}}
   \tikzset{
    labl/.style={anchor=south, rotate=90, inner sep=.5mm}
}
\newcommand{\colim@}[2]{%
  \vtop{\m@th\ialign{##\cr
    \hfil$#1\operator@font colim$\hfil\cr
    \noalign{\nointerlineskip\kern1.5\ex@}#2\cr
    \noalign{\nointerlineskip\kern-\ex@}\cr}}%
}
\newcommand{\colim}{%
  \mathop{\mathpalette\colim@{\rightarrowfill@\textstyle}}\nmlimits@
}
\title[The \'{e}tale Brauer-Manin Obstruction]{ The \'{e}tale Brauer-Manin obstruction for classifying stacks}
\keywords{Classifying stacks, quotient stacks, strong approximation, Brauer-Manin obstructions, Galois twists}
\subjclass[2020]{Primary 14G12, 11G35, 14G05, Secondary 14A20}
\author[Ajneet Dhillon]{Ajneet Dhillon}
\address{Ajneet Dhillon, Department of Mathematics, University of Western Ontario, London, Ontario, Canada, N6A 5B7}
\email{adhill3@uwo.ca}
\author[Nicole Lemire]{Nicole Lemire}
\address{Nicole Lemire, Department of Mathematics, University of Western Ontario, London, Ontario, Canada, N6A 5B7}
\email{nlemire@uwo.ca}
\author[Jonathan Martin]{Jonathan Martin}
\address{Jonathan Martin, Department of Mathematics, University of Western Ontario, London, Ontario, Canada, N6A 5B7}
\email{jmart383@uwo.ca}
\author[Yidi Wang]{Yidi Wang}
\address{Yidi Wang, Department of Mathematics, University of Western Ontario, London, Ontario, Canada, N6A 5B7}
\email{ywan6443@uwo.ca}
\begin{document}

\begin{abstract}  We study the strong approximation for classifying stacks~$BG$, where $G$ is a linear algebraic group over a number field $k$. More specifically, we prove that the \'etale Brauer-Manin obstruction is the only obstruction to strong approximation for $BG$. To prove the result, we formulate the theory of torsors and Galois twists for algebraic stacks.
\end{abstract}

\maketitle

\section{Introduction}

The study of \emph{local-global principles} for rational points on algebraic varieties over number fields is an active and long-standing research area in arithmetic geometry. For an algebraic variety $X$ over a number field $k$, one asks whether the existence of a $k_v$-point at all places of $k$ guarantees the existence of a~$k$-point. \emph{Strong approximation} answers a stronger local-to-global type of question via the density of points: whether points given locally at certain places (possibly infinitely many) of $k$ 
can be approximated arbitrarily closely by a $k$-point on ~$X$, equivalently, whether the natural map $X(k) \to X(\bA_k)$ is dense.

One intrinsic explanation of the failure of local-global principles is the 
\emph{Brauer-Manin obstruction}: there is a pairing $\left< \, ,\, \right>\colon X(\bA_k) \times \Br(X) \to \QQ/\ZZ$, and the Brauer-Manin locus $X(\bA_k)^{\Br}$ contains all adelic points that vanish under the pairing for all classes in $\Br(X)$. Comparing to $X(\bA_k)$, $X(\bA_k)^{\Br}$ is a finer set that contains $X(k)$.
For a broad class of algebraic varieties, it was shown that the image of $X(k) \to X(\bA_k)^{\Br}$ is dense. Namely, the Brauer-Manin obstruction is the \emph{only} obstruction to strong approximation. See for example, \cite{borovoi-demarche:13}, \cite{Demarche2010}, \cite{Harari2008}, etc.
This framework naturally extends to algebraic stacks, the~$k$-points of which are families of algebraic objects encoded with symmetries. 

In this manuscript, we address this question for classifying stacks $BG$ for linear algebraic groups $G$, the ~$k$-points of which classify ~$G$-torsors. The study of strong approximation for $BG$ therefore translates to that of density problems of~$G$-torsors.

A recent work by the first author showed that the Brauer-Manin obstruction is the only obstruction to strong approximation on $BG$ when $G$ is a connected linear algebraic group (\cite[Theorem 5.5]{Dhillon:25}). That is, the image of $BG(k) \to BG(\bA_k)^{\Br}$ is dense. When $G$ is disconnected, it is unclear whether the analogous result holds, especially when $G/G^{\circ}$ is not abelian. We instead consider a finer obstruction, the \emph{\'etale Brauer-Manin obstruction}. For an algebraic stack $\fX$, we define the \emph{\'etale Brauer-Manin locus} as 
\begin{equation*}
    \fX(\bA_k)^{\et, \Br}\coloneqq \bigcap_{\substack{\fY \overset{f}{\to} \fX \\ H \textnormal{-torsor}}} \coprod_{\sigma \in H^1(k,H)} f^{\sigma}(\fY^{\sigma}(\bA_{k})^{\Br}), 
\end{equation*} where $H$ is taken over all finite \'etale group schemes over $k$. It is a finer obstruction set and captures more subtle arithmetic phenomena that might be invisible at the original level.
In particular, we have proven the following result.

\begin{theorem}\label{thm:intro} 
Let $G$ be a linear algebraic group over a number field $k$. Let $S$ be a finite collection of places of $k$ containing all infinite places of $k$.
    \begin{enumerate}
        \item The \'etale Brauer-Manin locus $BG(\bA_{\cS,k})^{\et,\Br}$ is closed.
        \item The image of $BG(k)\to BG(\bA_{\cS,k})^{\et,\Br}$ is dense. That is, the \'etale Brauer-Manin obstruction is the only obstruction to strong approximation for $BG$ off $S$.
    \end{enumerate}
\end{theorem}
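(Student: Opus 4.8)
The plan is to reduce the disconnected case to the known connected case (\cite[Theorem 5.5]{Dhillon:25}) via a dévissage using the exact sequence $1 \to G^{\circ} \to G \to \pi_0(G) \to 1$, where $\pi_0(G) = G/G^{\circ}$ is a finite étale group scheme over $k$. The natural candidate for the covering $\fY \to BG$ appearing in the definition of the étale Brauer–Manin locus is the morphism $BG^{\circ} \to BG$, which is a torsor under $\pi_0(G)$ (after the theory of torsors for stacks developed earlier in the paper, the fibre of $BG^{\circ} \to BG$ over a $G$-torsor $P$ is the set of reductions of structure group to $G^{\circ}$, and this is a $\pi_0(G)$-torsor). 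First I would identify the Galois twists $(BG^{\circ})^{\sigma}$ for $\sigma \in H^1(k,\pi_0(G))$: each such twist should be $B({}_{\sigma}G^{\circ})$, where ${}_{\sigma}G^{\circ}$ is an inner (or outer, through the $\pi_0$-action) twist of $G^{\circ}$, which is again connected. Crucially, ${}_{\sigma}G^{\circ}$ is the connected component of a twisted form ${}_{\sigma}G$ of $G$ fitting into $1 \to {}_{\sigma}G^{\circ} \to {}_{\sigma}G \to \pi_0(G) \to 1$, so all the pieces stay within the class of linear algebraic groups.

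Next I would analyze the right-hand side of the defining intersection for this particular torsor. Applying \cite[Theorem 5.5]{Dhillon:25} to each connected group ${}_{\sigma}G^{\circ}$ gives that $B({}_{\sigma}G^{\circ})(k)$ is dense in $B({}_{\sigma}G^{\circ})(\bA_{\cS,k})^{\Br}$, and in particular each piece $f^{\sigma}\bigl((BG^{\circ})^{\sigma}(\bA_k)^{\Br}\bigr)$ is already the closure of the image of rational points pushed forward along $f^{\sigma}$. The fibre product/torsor formalism gives a partition $BG(k) = \coprod_{\sigma} f^{\sigma}\bigl(B({}_{\sigma}G^{\circ})(k)\bigr)$ indexed by the class in $H^1(k,\pi_0(G))$ of the induced $\pi_0(G)$-torsor, because a $G$-torsor reduces to $G^{\circ}$ after twisting by the appropriate class. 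Taking closures and using that this single torsor already cuts $BG(\bA_{\cS,k})$ down to $\coprod_{\sigma} f^{\sigma}\bigl(B({}_{\sigma}G^{\circ})(\bA_{\cS,k})^{\Br}\bigr)$, one obtains that $BG(k)$ is dense in that set; since the étale Brauer–Manin locus is contained in it, part (2) follows, and this simultaneously shows the intersection over all $H$-torsors is \emph{achieved} (up to closure) by the single torsor $BG^{\circ} \to BG$. For part (1), closedness of $BG(\bA_{\cS,k})^{\et,\Br}$ then follows because it equals a finite disjoint union $\coprod_{\sigma \in H^1(k,\pi_0(G))} f^{\sigma}\bigl(B({}_{\sigma}G^{\circ})(\bA_{\cS,k})^{\et,\Br}\bigr)$ — finite since $H^1(k,\pi_0(G))$ is finite for a finite $k$-group — each summand being closed (indeed, for connected groups the étale Brauer–Manin locus agrees with the ordinary Brauer–Manin locus, which is closed), the union being over a disjoint decomposition of a topological space, and the index set being finite.

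The main obstacle I anticipate is justifying that the particular torsor $BG^{\circ} \to BG$ is \emph{cofinal} (or at least dominant, after taking closures) among all finite étale torsors over $BG$ in the intersection defining the locus — i.e. that no finer finite étale covering can cut down the adelic points further than $BG^{\circ}$ already does. This requires understanding finite étale torsors over $BG$: by the stacky theory of torsors, a finite étale $H$-torsor $\fY \to BG$ should correspond to a section of some gerbe or, more concretely, to an extension-type datum $1 \to G \to \widetilde{G} \to H \to 1$ (or a twisted analogue), so $\fY \simeq B\widetilde{G}$; one then needs to show that refining by such coverings does not remove adelic points that survive after the $\pi_0(G)$-covering, which should follow because the connected pieces ${}_{\sigma}G^{\circ}$ already satisfy strong approximation with Brauer–Manin obstruction and further finite quotients of them are controlled by their (finite, abelian-by-the-connected-case-handling) arithmetic. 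A secondary technical point is making precise the identification of twists $(BG^{\circ})^{\sigma} \simeq B({}_{\sigma}G^{\circ})$ and the compatibility of the fibrewise $\pi_0(G)$-torsor structure with Galois descent — this is where the torsor/Galois-twist formalism for stacks set up earlier in the paper does the real work, and I would lean on it heavily rather than reprove anything.
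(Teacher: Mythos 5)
Your dévissage along $1\to G^{\circ}\to G\to \pi_0(G)\to 1$ and the use of the $\pi_0(G)$-torsor $BG^{\circ}\to BG$ is the same starting point as the paper, and your concluding sandwich (density of $BG(k)$ in $\coprod_{\sigma} f^{\sigma}\bigl((BG^{\circ})^{\sigma}(\bA_{\cS,k})^{\Br}\bigr)$ together with the containment of the \'etale locus in that union) is exactly how the paper finishes. The genuine gap is your identification $(BG^{\circ})^{\sigma}\simeq B({}_{\sigma}G^{\circ})$. By Proposition~\ref{p:inner} the twist is the quotient stack $[P/G]$, where $P$ is the $\pi_0(G)$-torsor of class $\sigma$; this is a gerbe which is neutral precisely when $\sigma$ lifts to $H^1(k,G)$, i.e.\ when $[P/G]$ has a $k$-point. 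When $\sigma$ does not lift, $(BG^{\circ})^{\sigma}$ is not the classifying stack of any $k$-group at all, so \cite[Theorem 5.5]{Dhillon:25} does not apply --- and these are exactly the twists you cannot discard, since density requires knowing that every twist with nonempty Brauer--Manin set acquires nearby rational points (a Hasse-principle statement for possibly non-neutral gerbes). The paper supplies this in Theorem~\ref{theorem:strong-app-twist}: choose a faithful representation $G\hookrightarrow \sl_n$, write $(BG^{\circ})^{\sigma}=[P\times^{G}\sl_n/\sl_n]$, observe that $P\times^{G}\sl_n$ is a homogeneous space of $\sl_n$ with connected geometric stabilizers (Lemma~\ref{l:homogeneous}), lift adelic points using triviality of $\sl_n$-torsors over $\bA_{\cS,k}$ and the surjection $\Br((BG^{\circ})^{\sigma})\twoheadrightarrow \Br(P\times^{G}\sl_n)$, and then invoke Borovoi--Demarche \cite[Theorem 0.1]{borovoi-demarche:13}. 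Some argument of this kind must replace your appeal to the connected-group case.

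Part (1) of your proposal also rests on two claims that fail. First, $H^1(k,\pi_0(G))$ is in general infinite over a number field (already $H^1(k,\ZZ/2)\cong k^{\times}/(k^{\times})^{2}$), so finiteness of the index set cannot come from there; the paper's Proposition~\ref{p:finite} gets finiteness only after fixing $S$, because a twist meeting $\fY(\bA_{k,S})$ is unramified outside $S$ and Hermite--Minkowski applies. Second, your asserted equality of the \'etale Brauer--Manin locus with a disjoint union of twist-level loci is precisely the cofinality statement you yourself flag as an obstacle; it is neither proved in your sketch nor needed. The paper avoids it entirely: for every finite \'etale torsor $f$ the set $\fY(\bA_k)^{f,\Br}$ is closed (Proposition~\ref{p:f-Br-closed}), by combining topological properness of each $f^{\sigma}$ on adelic points (Proposition~\ref{p:proper}, via Remark~\ref{r:torsor} and Proposition~\ref{p:inner}) with the finiteness of relevant twists at each level $\fY(\bA_{k,S})$, and then the \'etale locus is closed simply as an intersection of closed sets (Corollary~\ref{cor:et-BM-locus-closed}). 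You should restructure part (1) along these lines rather than through a claimed finite decomposition.
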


We work under the topology developed in \cite{Dhillon:25}. The main ingredient in the proof is the formalization of Galois twists for stacks. Though the theory is well-known for varieties and schemes, developing the analogous theory for stacks requires considerable work in 2-Categories. We adopt the setup about group actions on stacks in \cite{ginot} and study properties of torsors over an algebraic stack and their Galois twists.

Another significance of Theorem \ref{thm:intro} is its potential application to Malle's Conjecture. The inverse Galois problem asks whether any finite group $G$ is the Galois group of a Galois extension $L/k$, and Malle's Conjecture predicts the number of such Galois extensions with bounded discriminant, fixing a group $G$. Note that $G$-Galois extensions over~$k$ are connected $G$-torsors. In \cite{Ellenberg--Satriano--Zureick-Brown2023}, Ellenberg--Satriano--Zurieck-Brown defined height functions on algebraic stacks and translated Malle's Conjecture to counting~$k$-points on $BG$ with bounded height. 
A recent work by Loughran--Santens (see \cite{loughran-santens25}) showed that there is a deep correlation between Malle's conjecture and cohomological obstructions on $BG$. The authors would like to explore this direction in the future.

The paper is organized as follows. In Section \ref{sec:torsors}, we review the theory of torsors over algebraic stacks and define Galois twists of algebraic stacks. In Section \ref{sec:ebm}, we define the étale Brauer-Manin obstruction for stacks and prove that the corresponding locus is closed. In Section \ref{sec:maintheorem}, we present the proof of our main theorem. Finally, in Appendix \ref{appendix}, we present preliminaries and technical details for group actions on algebraic stacks.

\section*{Notation}

\begin{tabular}{cl}
    $k$ & Our ground field. It is assumed to be a number field.\\
    $\Omega_k$ & The set of places of $k$. \\
    $k_v$ & The completion of $k$ with respect to the valuation $v$.\\
    $\sO_v$ & The valuation subring of $k_v$.\\
    $\sO_{k,S}$ & The ring of $S$-integers in $k$. That is, $\sO_{k,S}\coloneqq \left\{x\in k \mid v(x)\geq 0, \forall v \not \in S\right\}$.\\
    $\bA_{k,S}$ & For some finite subset $S \subseteq \Omega_k$ this is the ring $\prod_{v\in S}k_v \times \prod_{v \not\in S}\sO_v$.\\
    $\bA_k$ & The ring of adeles of $k$.\\
    $\bA_{\cS,k}$ & For some finite subset $S\subseteq \Omega_k$ this is the image of of $\bA_k$ under the \\ & projection onto the product $\prod_{v\not\in S} k_v$. \\
    $G$ & A linear algebraic group over $k$. \\ 
    $G^\circ$ & The connected component of the identity of $G$. \\ 
    $BG$ & The classifying stack of $G$-torsors.
\end{tabular}


\section{Torsors over algebraic stacks}\label{sec:torsors}
The reader is referred to \cite{romagny:05} for a discussion of group actions on stacks. We assume that the reader is familiar with the general theory of torsors over schemes or algebraic spaces. 
We will consider algebraic stacks in the fppf-topology on schemes. 

The following lemma is elementary. 

\begin{lemma}\label{l:isomorphism-criteria-for-stacks}
    Let $f:\fX\to\fY $ be a morphism of algebraic stacks. 
    \begin{enumerate}
        \item Suppose that there exists a presentation $Y\to \fY$ such that $\fX\times_{\fY} Y $ is an algebraic space. Then~$f$ is representable by algebraic spaces.
        \item Suppose that there exists an fppf surjective morphism $Y\to \fY$ such $\fX\times_{\fY} Y \to Y$ is an isomorphism. Then $f$ is an isomorphism.
    \end{enumerate}
\end{lemma}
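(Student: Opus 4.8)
The plan is to reduce both parts to descent statements for morphisms of algebraic stacks, using that the relevant properties (being representable by algebraic spaces, being an isomorphism) can be checked after base change along an fppf cover, combined with the corresponding facts for algebraic spaces that the reader is assumed to know.

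For part (1), I would argue as follows. To show $f\colon\fX\to\fY$ is representable by algebraic spaces, I must show that for every scheme $T$ and every morphism $T\to\fY$, the fiber product $\fX\times_\fY T$ is an algebraic space. Choose a presentation $P\colon Y\to\fY$ as in the hypothesis, so $\fX\times_\fY Y$ is an algebraic space. Given $T\to\fY$, form the fiber product $T_Y := T\times_\fY Y$; since $Y\to\fY$ is a presentation (in particular smooth surjective, hence fppf surjective) and $T$ is a scheme, the projection $T_Y\to T$ is an fppf surjection, and $T_Y$ is an algebraic space (it is $\fX$-free: it's the fiber product of an algebraic space over $\fY$ with a scheme — actually one should note $T\times_\fY Y$ is an algebraic space because $Y\to\fY$ is representable by algebraic spaces, being a presentation). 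Now base-change $\fX\times_\fY T$ along $T_Y\to T$: we get $(\fX\times_\fY T)\times_T T_Y \cong \fX\times_\fY T_Y \cong (\fX\times_\fY Y)\times_Y T_Y$, which is an algebraic space since $\fX\times_\fY Y$ is. Thus $\fX\times_\fY T$ becomes an algebraic space after the fppf base change $T_Y\to T$, and by fppf descent for the property of being an algebraic space (a stack over a scheme that becomes an algebraic space fppf-locally is an algebraic space), $\fX\times_\fY T$ is an algebraic space. Hence $f$ is representable by algebraic spaces.

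For part (2), I would first invoke part (1) — or its input — to reduce to a statement about algebraic spaces. Given the fppf surjection $q\colon Y\to\fY$ with $\fX\times_\fY Y\to Y$ an isomorphism, note in particular $\fX\times_\fY Y$ is an algebraic space, so by (1) applied with this cover (an fppf surjection from an algebraic space suffices; if one insists on a scheme presentation, refine $Y$ further), $f$ is representable by algebraic spaces. Then for any scheme $T\to\fY$, $\fX\times_\fY T\to T$ is a morphism of algebraic spaces, and base-changing along the fppf surjection $T\times_\fY Y\to T$ (which is fppf surjective as in part (1)) yields $(\fX\times_\fY T)\times_T(T\times_\fY Y)\cong\fX\times_\fY(T\times_\fY Y)\cong(\fX\times_\fY Y)\times_Y(T\times_\fY Y)$, and the latter is isomorphic to $T\times_\fY Y$ over itself because $\fX\times_\fY Y\to Y$ is an isomorphism. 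So $\fX\times_\fY T\to T$ is a morphism of algebraic spaces that is an isomorphism after fppf base change; since being an isomorphism is fppf-local on the target for morphisms of algebraic spaces, $\fX\times_\fY T\to T$ is an isomorphism. As this holds for all $T\to\fY$ and isomorphism can be checked on a presentation, $f$ is an isomorphism.

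The only mildly delicate point — the "main obstacle," such as it is — is bookkeeping with the compatibility of iterated fiber products of stacks and making sure the cover one descends along is genuinely fppf and surjective (here it is, as the pullback of the fppf surjective presentation $Y\to\fY$). Everything else is a direct appeal to fppf descent: for "being an algebraic space" as a property of a stack over a scheme in part (1), and for "being an isomorphism" as a property of a morphism of algebraic spaces in part (2). Since the lemma is labelled elementary, I would keep the write-up to these two short descent arguments and cite the standard descent results rather than reproving them.
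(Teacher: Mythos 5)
Your proposal is correct and takes essentially the same route as the paper: part (1) is fppf descent of representability by algebraic spaces (you unpack the content of the Stacks Project result the paper simply cites, Tag 04ZP), and part (2) combines this with the fact that being an isomorphism is fppf-local on the target for representable morphisms, exactly as in the paper's proof.
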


\begin{proof}
    Since the presentation morphism $Y\to \fY$ is smooth and surjective, the first assertion follows from 
    \cite[\href{https://stacks.math.columbia.edu/tag/04ZP}{04ZP}]{stacks-project} since smooth morphisms are fppf.
    
    For the second assertion, \cite[\href{https://stacks.math.columbia.edu/tag/04ZP}{04ZP}]{stacks-project} and the 
    assumption that $\fX\times_{\fY} Y\cong Y$ is an algebraic space, show that $f$ is representable by algebraic spaces.
    Then fppf descent shows that $f:\fX\to \fY$ is an isomorphism.
    See \cite[\href{https://stacks.math.columbia.edu/tag/04XD}{04XD}]{stacks-project} and property (15) in \cite[\href{https://stacks.math.columbia.edu/tag/04XB}{Section 04XB}]{stacks-project}.
\end{proof}

By a group acting on an algebraic stack we will mean a weak group action as in \cite{ginot} and \cite{romagny:05}. See \Cref{appendix} for details.

\begin{definition}\label{d:stack-torsor}
Let $\fY$ be an algebraic stack over $(\sch/k)_{\mathrm{fppf}}
$ and $G$ a linear algebraic group over $k$. By a $G$-torsor over $\fY$
we mean an algebraic stack $\fX$ with a right action of ~$G$ and a morphism $f:\fX\to \fY$ such that 
\begin{enumerate}
    \item the morphism $f$ is equivariant for the trivial action of $G$ on $\fY $, 
    \item there is an fppf-cover $Y\to \fY$ by an algebraic space such that after base change to $Y$ the morphism $Y\times_{\fY}\fX  \to Y$ is the trivial $G$-torsor over $Y$. 
\end{enumerate}    
\end{definition}

\begin{remark}\label{r:basic}
    Note that in the above definition, the trivial $G$-torsor $Y \times_\fY \fX$ an algebraic space, so by \Cref{l:isomorphism-criteria-for-stacks} (1)
    $f$ is representable. Moreover, one sees that the canonical morphism
    $ \fX \times G \to \fX \times_\fY \fX$ is an isomorphism by using 
    \Cref{l:isomorphism-criteria-for-stacks} (2)
    and noting that after base change to $Y$ one obtains an isomorphism:
    $$(Y \times_\fY \fX) \times G \to  (Y \times_\fY \fX) \times_Y (Y \times_\fY \fX) \cong Y \times_\fY (\fX \times_\fY \fX)$$
\end{remark}

Proposition (\ref{p:basicExample}) below produces some interesting examples of torsors. 

\subsection{Galois Twists}\label{subsec:Galois-twist}

Let $f: \fX \to \fY$ be a $G$-torsor and let $F$ be a scheme with a left action of $G$. Then the product $\fX\times F$ has a right $G$-action 
where $G$ acts via the inverse on the $F$ component. We define $\fX\times^{G}F $ to be the quotient stack 
$$
\fX\times^{G}F := [\fX\times F/G],
$$
and denote the canonical map $\fX \times^G F \to \fY$ by $f^F$. 
See \cite[4.2]{ginot}.

\begin{remark}
    Note that Galois twists commutes with base change, i.e.,  
    $(Y\times_\fY \fX) \times^G F \cong Y \times_\fY (\fX \times^G F)$.
    In light of this, one can write 
    $Y \times_\fY \fX \times^G F$ without ambiguity.
\end{remark}

In the case that $F$ is a left torsor corresponding to a Galois cohomology class $\sigma\in H^1(k, G)$ for a field $k$, we denote 
$\fX \times^{G} F:= \fX^\sigma $ and the associated morphism as 
$$
f^{\sigma}: \fX^{\sigma}\to \fY. 
$$

Recall that if $P$ is a left $G$-torsor over $k$ then the corresponding inner form of $G$ is 
defined as the quotient of $G\times P$ by the action 
$$
(g,p)\longmapsto (g^h,h^{-1}p) \quad \text{ where } g^{h}=h^{-1}gh
$$
We denote this inner form by $G\times^{G,\inn} P$. We will also sometimes write 
$G^P \coloneqq G \times^{G,\inn}P$ when context is clear. See \Cref{e:twist} for more details.

\begin{proposition}
    Let $\fX \to \fY$ be a $G$-torsor. Suppose that $P$ is a left $G$-torsor over $k$. Then 
    $$
    \fX\times^{G}P \to \fY 
    $$
    is a $G^P$-torsor. 
\end{proposition}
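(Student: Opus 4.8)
The plan is to equip $\fX\times^G P$ with a right $G^P$-action and then verify the two conditions of \Cref{d:stack-torsor}, reducing the second to an explicit computation after a well-chosen fppf cover. I would work throughout with the formalism of weak group actions and quotient stacks recalled in \Cref{appendix}; note first that $\fX\times^G P=[(\fX\times P)/G]$ is an algebraic stack and that $G^P$ is a linear algebraic group (a form of $G$, see \Cref{e:twist}), so the statement is meaningful. For the $G^P$-action: on $\fX\times P$ the group $G$ acts on the right by $(x,p)\cdot h=(xh,\,h^{-1}p)$, and $\fX\times^G P$ is the quotient by this action. On representatives the $G^P$-action would be $[(x,p)]\cdot[(g,p)]:=[(xg,p)]$, using that $P$ is a torsor to align the $P$-components of the two classes; a short check shows this is independent of the common representative, is a right group action, and that $[(e,p)]\in G^P$ acts trivially. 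Equivalently, and this is how I would make it precise stack-theoretically, one identifies $G^P$ with the sheaf $\underline{\operatorname{Aut}}_G(P)$ of $G$-equivariant automorphisms of $P$ (see \Cref{e:twist}), which acts on $P$ on the right commuting with the left $G$-action; then $(x,p)\cdot\phi=(x,\phi(p))$ defines a right $G^P$-action on $\fX\times P$ commuting with the $G$-action above, so it descends to $\fX\times^G P$ by the descent of group actions along quotient presentations from \Cref{appendix}.

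Condition (1) is then immediate. The structure map $f^P\colon\fX\times^G P\to\fY$ is induced by the composite $\fX\times P\xrightarrow{\operatorname{pr}}\fX\xrightarrow{f}\fY$, which is invariant for the $G$-action just quotiented (because $f$ is equivariant for the trivial $G$-action on $\fY$) and is also invariant for the $G^P$-action, since the latter moves only the $\fX$-coordinate, and within a single fibre of $f$. Hence $f^P$ is equivariant for the trivial $G^P$-action on $\fY$; turning this into the required $2$-categorical statement is routine given the machinery of \Cref{appendix}.

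For condition (2), fix an fppf cover $Y\to\fY$ by an algebraic space over which $\fX$ is the trivial $G$-torsor, and an fppf cover $\Spec k'\to\Spec k$ over which $P$ becomes the trivial left $G$-torsor (such a $k'$ exists since $P$ is a torsor under the affine group $G$; one may take $\Spec k'=P$, or a finite extension since $\operatorname{char}k=0$). Put $Y':=Y\times_k\Spec k'$, an algebraic space admitting an fppf cover $Y'\to\fY$ (a composite of fppf covers). Using the base-change compatibility of Galois twists noted above together with the defining property of $\fX$,
\[
Y'\times_\fY(\fX\times^G P) \cong (Y'\times_\fY\fX)\times^G P \cong (Y'\times_k G)\times^{G}P_{Y'} \cong (Y'\times_k G)\times^{G}(Y'\times_k G),
\]
the last step because $P_{Y'}$ is the trivial left $G$-torsor over $Y'$. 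A direct manipulation of the quotient presentations then produces an isomorphism $(Y'\times_k G)\times^G(Y'\times_k G)\cong Y'\times_k G$ via $[((z,g),(z,g'))]\mapsto(z,gg')$, identifies $(G^P)_{Y'}$ with $G_{Y'}$ (the inner twist of $G$ by a trivial torsor being $G$ itself), and matches the induced $(G^P)_{Y'}$-action with right translation; thus $Y'\times_\fY(\fX\times^G P)\to Y'$ is the trivial $G^P$-torsor over $Y'$, which is exactly what condition (2) demands.

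The main obstacle is the construction step: making the $G^P$-action on the quotient stack $\fX\times^G P$ precise inside the weak-group-action framework, supplying the coherence $2$-isomorphisms and checking that they descend through the quotient by $G$, together with pinning down the identification $G^P\cong\underline{\operatorname{Aut}}_G(P)$ with the correct left/right conventions. Once that is in place, conditions (1) and (2) are essentially formal, the only genuine computation being the routine manipulation of quotient presentations in the trivialized case handled in the third paragraph.
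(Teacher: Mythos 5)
Your proposal follows the same skeleton as the paper's proof (equip $\fX\times^G P$ with a right $G^P$-action, note equivariance over $\fY$ is formal, verify local triviality after a trivializing cover), but it differs in two ways worth recording. First, you build the action by letting $\underline{\operatorname{Aut}}_G(P)$ act on the $P$-factor of $\fX\times P$ and descending, whereas the paper constructs it in \Cref{p:innerAction} as a twisted conjugation moving the $\fX$-factor, $(x,p,g,p')\mapsto (x\cdot(h^{-1}gh),p)$; the two descriptions agree on the quotient under the identification $G^P\cong\underline{\operatorname{Aut}}_G(P)$ (and your parenthetical ``moves only the $\fX$-coordinate'' fits the paper's description, not your $\operatorname{Aut}$-description, though either makes condition (1) immediate). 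Be aware, however, that the appendix contains no general ``descent of group actions along quotient presentations'' lemma: the substance of this step is exactly the coherence bookkeeping of \Cref{p:innerAction}, so you would either have to verify the weak-action axioms of \Cref{d:weakAction} directly for your $\operatorname{Aut}$-action or reduce to \Cref{p:innerAction} via the identification with $G^P$ from \Cref{e:inner}; you correctly flag this as the main obstacle, and your variant is if anything easier to make strict since $\operatorname{Aut}_G(P)$ acts only on the scheme factor. Second, for condition (2) you enlarge the cover to $Y'=Y\times_k\Spec k'$ so as to trivialize $P$ as well, while the paper works over $Y$ alone and writes $Y\times_\fY\fX\times^G P\cong Y\times_\fY G\times^G P\cong Y\times_\fY G^P$. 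Your extra trivialization is the more careful route: the twist of the trivial $G$-torsor $Y\times_k G$ by $P$ is canonically $Y\times_k P$ with $G^P$ acting through $\underline{\operatorname{Aut}}_G(P)$, and this is the trivial $G^P$-torsor over the base only after $P$ itself has been trivialized, which is precisely what passing to $Y'$ (or to the cover $P\times_k Y\to Y$) accomplishes. So your argument is correct in outline, and at this step it actually supplies a justification that the paper's one-line chain leaves implicit.
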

\begin{proof}
   
    The required action is constructed in \ref{p:innerAction}.
It is then routine to check that the induced map 
    $\fX \times^G P \to \fY$ is $G^P$-equivariant when ~$\fY$ is given the
    trivial $G^P$ action. This shows (1) in \Cref{d:stack-torsor}.

    To check (2), we take $Y \to \fY$ to be 
    an fppf-cover such that $Y \times_\fY \fX \to Y$ is the trivial 
    ~$G$-torsor over $Y$. Then we have 
    $Y \times_\fY \fX \times^G P \cong Y \times_\fY G \times^G P \cong Y \times_\fY G^P \to Y$
    is the trivial ~$G^P$-torsor over $Y$. 
\end{proof}

\subsection{The basic example}
Consider a short exact sequence of linear algebraic groups
$$
1\to N \to G\to G/N\to 1 
$$ where $N$ is a normal subgroup of $G$.
For a right $G$-torsor $P$, $P\times^{G}G/N$ has a right action of $G/N$.

Let $Red(G\rsa N)$ denote the category fibered in groupoids over $\left(\sch/k\right)$ defined by 
\begin{enumerate}
    \item the objects over a scheme $S$ are pairs $(P,s)$ where $P$ is a right $G$-torsor over $S$ and~$s$ is a section of $P\times^{G}G/N$; 
    \item the morphisms over $S$ are isomorphisms of torsors compatible with sections. 
\end{enumerate}

It is well known that $Red(G\rsa N)$ is an algebraic stack.
\begin{lemma}
    There is a strict right action of $G/N$ on $Red(G\rsa N)$. 
\end{lemma}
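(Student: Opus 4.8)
The claim is that $G/N$ acts on the right on the stack $\mathrm{Red}(G\rsa N)$. The natural recipe is to define the action on objects: given a pair $(P,s)$ over a scheme $S$, where $s$ is a section of the fppf sheaf $P\times^G G/N$, and given $\bar g\in (G/N)(S)$, send $(P,s)$ to $(P, s\cdot\bar g)$, where $s\cdot\bar g$ denotes the translate of the section under the right $(G/N)$-action on $P\times^G G/N$. On morphisms one does nothing beyond transporting the constraint: an isomorphism $\varphi\colon (P,s)\to(P',s')$ of torsors compatible with sections is automatically compatible with the translated sections since $\varphi$ induces a $(G/N)$-equivariant map $P\times^G G/N\to P'\times^G G/N$. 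One should spell out that $s\cdot\bar g$ still lands in the sheaf $P\times^G G/N$ (same torsor $P$, only the section moves), so the target really is an object of $\mathrm{Red}(G\rsa N)$ over $S$.

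**Key steps in order.** First I would recall precisely the right $(G/N)$-action on $P\times^G G/N$: since $N\triangleleft G$, the group $G/N$ acts on the set $G/N$ by right multiplication, this action commutes with the left $G$-action $g\cdot(xN)=(gx)N$ used to form the contracted product, and hence descends to a right $(G/N)$-action on $P\times^G G/N$ fibered over $S$. Second, I would check this assembles into a functor $\mathrm{Red}(G\rsa N)\times (G/N)\to \mathrm{Red}(G\rsa N)$ over $(\sch/k)$: compatibility with pullback along $S'\to S$ is immediate because both $P\times^G G/N$ and its $(G/N)$-action are formed by operations stable under base change. Third — and this is the point of the word "strict" — I would verify the action axioms hold \emph{on the nose}, not merely up to coherent isomorphism: the identity $\bar e\in(G/N)(S)$ fixes $s$ exactly because right multiplication by the identity is the identity map of the sheaf, and $(s\cdot\bar g_1)\cdot\bar g_2 = s\cdot(\bar g_1\bar g_2)$ holds as an equality of sections because right multiplication in $G/N$ is strictly associative. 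Thus no $2$-morphism data is needed and the weak/strict distinction of \Cref{appendix} collapses here.

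**Where the real work is.** The genuine subtlety is not the existence of the assignment but making sure it is a well-defined morphism of categories fibered in groupoids and that strictness is honest. Concretely, one must confirm that $s\cdot\bar g$ is a \emph{section} of $P\times^G G/N\to S$ (not just a map of sheaves) — this is formal since right translation is an automorphism over $S$ — and that the chosen cleavage/pullback functors on $\mathrm{Red}(G\rsa N)$ are compatible with translation so that the square expressing functoriality of pullback commutes strictly rather than up to a canonical $2$-cell. In the contracted-product formalism these compatibilities are all canonical and strict, so the proof is essentially a matter of unwinding definitions; I would present it by exhibiting the functor on objects and morphisms, noting functoriality under base change in one line, and then observing the two action axioms as literal equalities. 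No step should require a nontrivial calculation — the only thing to be careful about is to phrase everything at the level of the sheaf $P\times^G G/N$ so that associativity and unitality of the $(G/N)$-action are inherited verbatim from the group law of $G/N$.
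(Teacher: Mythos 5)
Your proposal is correct and follows essentially the same route as the paper: the paper also defines the action by $(P,s)\cdot h := (P, h\circ s)$, where $h$ acts as the (translation) automorphism of $P\times^G G/N$ induced by $h\in (G/N)(S)$, observes that morphisms of torsors are automatically compatible with the translated sections by $(G/N)$-equivariance of the induced map on contracted products, and verifies unitality and associativity as literal equalities to get strictness. Your extra remarks on base-change compatibility and on $s\cdot\bar g$ remaining a section are harmless elaborations of what the paper leaves implicit.
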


\begin{proof}
    Let $H \coloneqq G/N$. Given an object $(P,s)$ of 
    $Red(G\rsa N)$ over a scheme $S$, note that $P \times^G H$ is an $H$-torsor. 
    An element $h \in H(S)$ induces an automorphism of $P \times^G H$.
    We will abuse notation and denote this automorphism by $h$. Then $h\circ s$
    is a section of $P \times^G H$. The action of $H$ on $Red(G\rsa N)$ on 
    objects is given by $(P,s)\cdot h := (P,h\circ s)$.

    We then show that this group action is strict. 
    Let $e \in H(S)$ denote the unit element. The induced automorphism on $P \times^G H$ is the identity. Thus, 
    $(P,s)\cdot e = (P,s)$. Additionally, note that 
   $$
   (P,s)\cdot h_1\cdot h_2 = (P,h_2\circ h_1 \circ s) = (P,h_1h_2\circ s) = (P,s)\cdot h_1h_2
   $$  This shows that the action on objects is strict.

  
Recall that a morphism $\varphi \colon (P,s) \to (Q, s')$ is a morphism $\phi \colon P\to Q$ that is compatible with the sections, i.e., $\phi \circ s = s'$. An action of $h$ on $\varphi$ yields a morphism $\varphi.h \colon (P, h\circ s) \to (Q, h\circ s')$, by noting that $\phi$ is $H$-equivariant, i.e., $h \circ \phi = \phi \circ h$. It follows immediately that this action is strict as well.  
\end{proof}

We will denote by $BG$ the classifying stack of \textit{right} torsors over $G$. 
The following lemma is well-known. We provide an algebraic proof for the convenience of the readers.

\begin{lemma}\label{l:reduction}
    There is an equivalence of categories fibered in groupoids
    $$ BN \cong Red(G\rsa N) $$
    Hence, there is a weak action of $G/N$ on $BN$. 
\end{lemma}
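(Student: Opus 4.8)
The plan is to construct the equivalence $BN \to Red(G\rsa N)$ directly on objects and morphisms, and then to verify it is fully faithful and essentially surjective, using fppf descent where needed. First I would set up the functor in the natural direction: given a scheme $S$ and a right $N$-torsor $Q$ over $S$, I would form the contracted product $P \coloneqq Q \times^N G$, which is a right $G$-torsor over $S$ (this is the standard extension-of-structure-group construction, and it is representable since $Q$ is an algebraic space and $G$ is affine). The key observation is that $P \times^G (G/N) = (Q \times^N G)\times^G (G/N) \cong Q \times^N (G/N)$, and since $N$ acts trivially on $G/N$ from the left after passing through $N \hookrightarrow G$, this last contracted product is canonically the trivial $(G/N)$-bundle $S \times (G/N)$; hence it carries a canonical section $s_Q$, namely the one through the identity coset. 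This gives the object $(P, s_Q)$ of $Red(G\rsa N)$ over $S$. On morphisms, an isomorphism of $N$-torsors $Q \to Q'$ induces an isomorphism $Q\times^N G \to Q'\times^N G$ of $G$-torsors, which one checks is automatically compatible with the canonical sections because the construction of $s_Q$ is functorial. One should also check compatibility with pullback along $S' \to S$, so that this really defines a $1$-morphism of categories fibered in groupoids.

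Next I would construct a quasi-inverse. Given $(P, s)$ in $Red(G\rsa N)(S)$, the section $s$ of $P \times^G (G/N) \to S$ cuts out a sub-$N$-torsor of $P$: concretely, $Q \coloneqq s^{*}P$, where one views $P \to P\times^G(G/N)$ as an $N$-torsor (the quotient map by the residual $N$-action), and pulls back along $s$. Equivalently, $Q$ is the preimage in $P$ of the image of the section, and the right $N$-action on $P$ restricts to a right $N$-action making $Q \to S$ an $N$-torsor. I would then check that the two constructions are mutually inverse up to canonical $2$-isomorphism: starting from $Q$, forming $P = Q\times^N G$ with its canonical section, and pulling back recovers $Q$ (this is essentially the statement that $Q \times^N G$ contains $Q$ as the fiber over the identity coset); conversely, starting from $(P,s)$, extracting $Q = s^*P$ and re-extending, $Q \times^N G \cong P$ compatibly with sections, which can be verified fppf-locally on $S$ where $P$ and $Q$ trivialize. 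Both directions reduce to elementary torsor bookkeeping once the compatibility-with-section condition is tracked carefully.

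For the final sentence of the lemma, the weak action of $G/N$ on $BN$ is obtained by transporting the strict action of $H = G/N$ on $Red(G\rsa N)$ (constructed in the preceding lemma) across the equivalence $BN \cong Red(G\rsa N)$. Since an equivalence of categories fibered in groupoids only transports a strict action to a weak action (the $2$-cocycle data for associativity and unit come from the chosen quasi-inverse and the unit/counit $2$-isomorphisms of the equivalence), this gives precisely a weak action in the sense of \cite{ginot} discussed in \Cref{appendix}; I would cite the relevant statement there that an equivalence transports a (weak or strict) action to a weak action.

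**Main obstacle.** The routine torsor arithmetic is not the difficulty; the delicate point is keeping the $2$-categorical bookkeeping honest. Specifically, when one says the functor $Q \mapsto (Q\times^N G, s_Q)$ "respects morphisms and pullbacks," one must exhibit the coherence $2$-isomorphisms (the pullback of the contracted product is canonically the contracted product of the pullback) and check they satisfy the cocycle condition making this a genuine morphism of stacks; and when transporting the group action, one must confirm that the weak-action axioms of \cite{ginot} are exactly what the equivalence produces. I expect verifying this coherence — rather than any geometric input — to be the main technical burden, which is presumably why the authors relegate the general $2$-categorical framework to \Cref{appendix}.
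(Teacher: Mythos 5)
Your proposal is correct and follows essentially the same route as the paper: the functor $Q \mapsto (Q\times^N G, s_Q)$ with the canonical section coming from the identity coset, the quasi-inverse $(P,s)\mapsto s^{*}P$ using that $P \to P\times^G (G/N)$ is an $N$-torsor, the check that these are mutually quasi-inverse by torsor bookkeeping, and the transport of the strict $G/N$-action on $Red(G\rsa N)$ across the equivalence to obtain only a weak action on $BN$ (the paper's \Cref{e:actions}). The only cosmetic difference is that you trivialize $Q\times^N(G/N)$ entirely using normality of $N$, whereas the paper only uses that $N$ fixes the identity coset; both yield the same canonical section.
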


\begin{proof}
    \Cref{e:actions} shows that equivalences of categories preserve weak actions,
    but not necessarily strict actions. As such, the second assertion follows from the 
    first assertion and the previous lemma. 


Define 
        \begin{align*}
            \Phi \colon BN(S) &\to Red(G\rsa N)(S) \\
            Q &\mapsto P \coloneqq Q \times^{N} G
        \end{align*} For $\Phi$ to be well-defined, we have to check $P \times^G G/N$ admits a section. Note that
        \[
            P \times^G G/N  \cong Q \times^{N} G \times^G G\times^{G/N} \Spec(k) \cong Q \times^{N} G/N
        \] Since $N$ fixes the identity coset $G/N$, it admits an $N$-equivariant map $\Spec(k) \to G/N$. Applying $Q\times^{N} \cdot$ leads to a section $\Spec(k) \to Q \times^{N} G/N \cong  P \times^G G/N$. It is left to the reader to verify the functoriality of this construction.

    

    
    Next, define $\psi: Red(G \rsa N) \to BN$, 
    where $\psi(P,s)$ is defined by the cartesian square:
    $$
    \begin{tikzcd}
        \psi(P,s) & P \\
        S & {P\times^G G/N}
        \arrow[from=1-1, to=1-2]
        \arrow[from=1-1, to=2-1]
        \arrow["\lrcorner"{anchor=center, pos=0.125}, draw=none, from=1-1, to=2-2]
        \arrow[from=1-2, to=2-2]
        \arrow["s", from=2-1, to=2-2]
    \end{tikzcd}
    $$
    Here $P \to P \times^G G/N \cong P/N$ is  is an $N$-torsor. Then it follows that $\psi(P,s) \to S$ is an ~$N$-torsor. Once again, functoriality is left to the reader.

    Next, we show that $\psi\circ \phi \cong \id_{BN}$.
    Let $Q$ be an $N$-torsor over $S$. By construction, to show that 
    $\psi\circ\phi(Q) \xrightarrow{\sim} Q$,  we check that the following diagram is 
    cartesian:
    $$
    \begin{tikzcd}
        Q & {Q\times^NG} \\
        S & {(Q\times^N G)\times^G G/N}
        \arrow[from=1-1, to=1-2]
        \arrow[from=1-1, to=2-1]
        \arrow[from=1-2, to=2-2]
        \arrow["{\sigma_Q}", from=2-1, to=2-2]
    \end{tikzcd}
    $$ by noting that 
    $$
    (Q\times^N G)\times^G G/N \cong Q \times^N G/N \cong Q/N \times G/N \cong S \times Q/N.
    $$

    It now remains to show that
    $\phi\circ\psi \cong \id_{Red(G\rsa N)}$. Let $(P,s) \in Red(G\rsa N)(S)$.
    Since $s:S \to P \times^G G/N$ is $N$-equivariant, the map 
    $\psi(P,s) \to P$ from the cartesian diagram defining $\psi(P,s)$ is also 
    $N$-equivariant. This induces a $G$-equivariant morphism of $G$-torsors
    $\phi(\psi(P,s)) = \psi(P,s) \times^N G \to P$, which is an isomorphism. It follows by 
    construction that this isomorphism is compatible with the sections 
    of $\psi(P,S)\times^NG$ and~$P$. 
    This completes the proof.
\end{proof}

\begin{proposition}\label{p:basicExample}
    The canonical map $BN\to BG$ is a $G/N$-torsor.
\end{proposition}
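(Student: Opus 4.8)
The plan is to verify the two conditions of Definition~\ref{d:stack-torsor} directly, using the equivalence $BN \cong Red(G\rsa N)$ from Lemma~\ref{l:reduction} to make the $G/N$-action concrete. Write $H \coloneqq G/N$. Condition~(1) asks that the canonical morphism $BN \to BG$ be $H$-equivariant for the trivial $H$-action on $BG$. Under the identification $BN \simeq Red(G\rsa N)$, the map to $BG$ sends a pair $(P,s)$ to the underlying $G$-torsor $P$, forgetting the section; since the $H$-action on $Red(G\rsa N)$ only modifies the section via $(P,s)\cdot h = (P, h\circ s)$ and leaves $P$ unchanged, this morphism is visibly invariant, hence equivariant for the trivial action on $BG$. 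So (1) is essentially immediate once the action is expressed on $Red(G\rsa N)$.

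The substance is condition~(2): I must exhibit an fppf cover $Y \to BG$ such that $Y \times_{BG} BN \to Y$ is the trivial $H$-torsor. The natural choice is the presentation $Y = \Spec k \to BG$ corresponding to the trivial $G$-torsor; this is an fppf (indeed smooth) cover since $G$ is smooth over $k$. Then I need to identify the fiber product $\Spec k \times_{BG} BN$. By the standard description of fiber products over a classifying stack, $\Spec k \times_{BG} BN$ is the stack whose $S$-points are pairs consisting of an object of $BN(S)$ together with a trivialization of the associated $G$-torsor; using the equivalence $BN \cong Red(G\rsa N)$ and the construction $\Phi(Q) = Q\times^N G$, an object amounts to an $N$-torsor $Q$ over $S$ together with an isomorphism $Q\times^N G \cong G_S$ of $G$-torsors. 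Such a trivialization forces $Q$ itself to be trivial (since $Q \hookrightarrow Q\times^N G$ recovers $Q$ as the preimage of the identity coset, compatibly with the section), and the set of trivializations of $Q\times^N G \cong G_S$ refining a fixed trivialization of $Q$ is a torsor under $(G/N)(S) = H(S)$. Making this precise, one gets a morphism $\Spec k \times_{BG} BN \to \underline{H}$ which is an isomorphism of $H$-stacks, so $\Spec k \times_{BG} BN \to \Spec k$ is the trivial $H$-torsor.

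Concretely I would run the identification through the cartesian-square description of $\psi$ from Lemma~\ref{l:reduction}: pulling back $BN \to BG$ along $\Spec k \to BG$ means fixing $P = G_S$ (the trivial $G$-torsor) and letting the section $s \colon S \to G_S \times^G G/N \cong (G/N)_S$ vary; the data is then exactly a point of $(G/N)_S = H_S$, and the residual $H$-action $(P,s)\cdot h = (P, h\circ s)$ becomes translation on $H_S$. This is the trivial $H$-torsor, and it matches the $H$-action of Remark on $BN$ under the equivalence because Lemma~\ref{l:reduction} together with \Cref{e:actions} guarantees the equivalence is $H$-equivariant (for weak actions). Then \Cref{l:isomorphism-criteria-for-stacks}(1) (as invoked in Remark~\ref{r:basic}) automatically gives that $BN \to BG$ is representable.

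I expect the main obstacle to be the careful bookkeeping in condition~(2): identifying $\Spec k \times_{BG} BN$ with $H_S$ \emph{as an $H$-stack}, i.e.\ matching the residual action coming from the fiber-product description with the action $(P,s)\cdot h = (P,h\circ s)$ transported from $Red(G\rsa N)$ along the equivalence of Lemma~\ref{l:reduction}. The 2-categorical subtleties — that the equivalence $BN\cong Red(G\rsa N)$ only preserves \emph{weak} actions, and that one must track isomorphisms of torsors rather than equalities — mean the compatibility has to be checked at the level of the groupoid of sections, not just on isomorphism classes. Everything else (smoothness of the cover, the fact that a trivialization of $Q\times^N G$ trivializes $Q$) is routine, but this compatibility is where the real content of the 2-categorical setup from the appendix gets used.
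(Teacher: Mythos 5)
Your overall strategy is the same as the paper's: equivariance is read off from the description of the $G/N$-action on $Red(G\rsa N)$ (the action only changes the section, not the underlying $G$-torsor), and local triviality is checked against the cover $\Spec k\to BG$ by identifying the $2$-fibre product $\Spec k\times_{BG}BN$ with $G/N$ carrying the translation action, which is exactly the $2$-cartesian diagram the paper invokes. Your second, concrete identification (fix $P=G_S$ with its trivialization, so the remaining datum is a section $s$ of $P\times^G G/N\cong (G/N)_S$, and the action $(P,s)\cdot h=(P,h\circ s)$ becomes translation on $H_S$) is correct and is in substance the paper's proof, just spelled out.

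However, your first identification contains a false intermediate claim: a trivialization of $Q\times^N G$ does \emph{not} force the $N$-torsor $Q$ to be trivial. It only says the class of $Q$ dies under $H^1(S,N)\to H^1(S,G)$; under the trivialization, $Q\hookrightarrow Q\times^N G\cong G_S$ is identified with the preimage of \emph{some} section of $(G/N)_S$, not necessarily the identity coset. For instance, with $N=\mu_2\subset G=\Gm$ over $\QQ$, the $\mu_2$-torsor of square roots of $2$ is nontrivial, yet the induced $\Gm$-torsor is trivial by Hilbert 90; this gives a point of $\Spec k\times_{B\Gm}B\mu_2\cong \Gm$ whose underlying $N$-torsor is nontrivial. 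Consequently the ensuing count of ``trivializations refining a fixed trivialization of $Q$'' is not the right bookkeeping. The error is inessential to your proof because your $Red(G\rsa N)$-based description of the fibre product does not use it, but that sentence should be deleted rather than ``made precise'': the correct statement is that the fibre product parametrizes sections of $(G/N)_S$ for the trivialized torsor $G_S$ (equivalently, cosets $\bar g\in (G/N)(S)$, with $Q$ the pullback of $G\to G/N$ along $\bar g$), as in your second paragraph.
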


\begin{proof}By \Cref{l:reduction}, there is a strict action of $G/N$ on $BN$, so one just needs to 
    verify the local triviality condition in \Cref{d:stack-torsor}. Using the description of the action in terms of 
    $Red(G\rsa N)$ one see that the morphism $BN\to BG$ is equivariant. This follows from 
    the 2-cartesian diagram:
    $$
    \begin{tikzcd}
        {G/N} & {\text{Spec}(k)} \\
        BN & BG
        \arrow[from=1-1, to=1-2]
        \arrow[from=1-1, to=2-1]
        \arrow[from=1-2, to=2-2]
        \arrow[from=2-1, to=2-2]
    \end{tikzcd}
    $$
\end{proof}

In what follows we consider a possibly disconnected linear algebraic group $G$ over ~$k$. We denote by $G^{\circ}$ the connected component of the 
identity. Note that $G^{\circ}$ is defined over ~$k$ and is a normal subgroup. We write $H:= G/G^{\circ}$. 

Let $P$ be a left $H$-torsor representing some Galois cohomology class  $\sigma\in H^1(k,H)$. We can form the twisted stack 
$$(BG^{\circ})^{\sigma} \coloneqq BG^{\circ}\times^{H}P = [BG^{\circ}\times P/H]. 
$$
Consider a $k$-scheme $T$. 
By \cite[Theorem 4.1]{romagny:05}, a $T$-point of this twist consists of a pair $(Q,\Phi)$,
where $Q\to T$ is a $H$-torsor and $\Phi:Q\to BG^{\circ}\times P$ is an equivariant map. 



\begin{proposition}\label{p:inner} Let notation be as above. 
     The twisted stack $(BG^{\circ})^{\sigma}$ is isomorphic to the quotient stack $[P/G]$.
\end{proposition}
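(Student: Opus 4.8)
The plan is to construct an explicit equivalence of categories fibered in groupoids between $(BG^\circ)^\sigma = [BG^\circ \times P / H]$ and $[P/G]$, and then verify it is an equivalence by a descent argument along the fppf cover $P \to \Spec(k)$ (or along a splitting field of $P$), invoking \Cref{l:isomorphism-criteria-for-stacks}(2). Since $(BG^\circ)^\sigma$ is a twist, the natural first move is to unwind Romagny's description (\cite[Theorem 4.1]{romagny:05}) already recalled in the excerpt: a $T$-point is a pair $(Q,\Phi)$ with $Q \to T$ an $H$-torsor and $\Phi \colon Q \to BG^\circ \times P$ an $H$-equivariant map. Equivalently, $\Phi$ is a pair $(\Phi_1,\Phi_2)$ where $\Phi_2 \colon Q \to P$ is $H$-equivariant (so $Q \cong T \times_{\Spec k} P$ after noting $P$ is itself an $H$-torsor over $k$, and such $\Phi_2$ exists iff... — actually $\Phi_2$ just trivializes $Q$ as the pullback of $P$) and $\Phi_1 \colon Q \to BG^\circ$ is an $H$-equivariant map, i.e. a $G^\circ$-torsor $R \to Q$ together with descent-type compatibility data recording the $H$-equivariance. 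The key point is that an $H$-equivariant $G^\circ$-torsor over $Q$, combined with the $H$-torsor structure $Q \to T$, assembles into a $G$-torsor over $T$: one takes $R/H$ (using that $G^\circ \rtimes H$-type data glues $G^\circ$ and $H$ into $G$), which is fppf-locally a $G$-torsor, and the section/twisting data by $P$ matches exactly the condition that this $G$-torsor admits a reduction whose associated $H$-torsor is $P$. That is the content of the $[P/G]$ side: a $T$-point of $[P/G]$ is a $G$-torsor $S \to T$ together with a $G$-equivariant map $S \to P$.

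Concretely, I would define the functor $\Psi \colon [P/G](T) \to (BG^\circ)^\sigma(T)$ as follows. Given $(S \to T, \; \psi \colon S \to P)$ with $\psi$ being $G$-equivariant (where $G$ acts on $P$ through $G \to H$), set $Q \coloneqq S/G^\circ$, which is an $H = G/G^\circ$-torsor over $T$, and observe that $\psi$ descends to an $H$-equivariant map $\bar\psi \colon Q \to P$ since $G^\circ$ acts trivially on $P$. Meanwhile $S \to S/G^\circ = Q$ is a $G^\circ$-torsor, and the residual $H$-action on $Q$ lifts (only up to the conjugation action, hence the twist) to make $S \to Q$ an $H$-equivariant $G^\circ$-torsor — this is precisely a map $\Phi_1 \colon Q \to BG^\circ$ compatible with the $H$-action, where the $H$-action on $BG^\circ$ is the outer/conjugation one coming from $1 \to G^\circ \to G \to H \to 1$. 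Packaging $\Phi = (\Phi_1, \bar\psi) \colon Q \to BG^\circ \times P$ gives the desired $T$-point $(Q,\Phi)$ of the quotient $[BG^\circ \times P / H]$. Running this construction in reverse — from $(Q,\Phi)$ recover $S$ as the $G^\circ$-torsor over $Q$ classified by $\Phi_1$, with $G$-action built from the $G^\circ$-action and the $H$-torsor structure of $Q \to T$, and $\psi \colon S \to Q \to P$ — gives the inverse functor, and one checks the two composites are naturally isomorphic to the identities.

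To prove $\Psi$ is an equivalence I would not check essential surjectivity and full faithfulness by hand globally, but instead use \Cref{l:isomorphism-criteria-for-stacks}(2): both stacks are algebraic (the twist of an algebraic stack is algebraic, and $[P/G]$ is a quotient stack), so it suffices to exhibit an fppf cover $Y \to \Spec(k)$ after which $\Psi$ becomes an isomorphism. Taking $Y$ to be a finite Galois extension $k'/k$ splitting $P$ (or simply $Y = P$ itself), over $Y$ the torsor $P$ becomes trivial, so $(BG^\circ)^\sigma \times_k Y \cong BG^\circ_Y$ and likewise $[P/G] \times_k Y \cong [G_Y / G_Y] \cong BG^\circ_Y$ via the analog of \Cref{p:basicExample} / \Cref{l:reduction} applied to $1 \to G^\circ \to G \to H \to 1$; one then checks $\Psi$ base-changes to this standard identification, which is the untwisted, already-established case. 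Alternatively, and perhaps more cleanly, I can cite \Cref{l:reduction} directly: $[P/G] \times_k Y \cong [G_Y/G_Y]$ and $BG^\circ \cong Red(G \rsa G^\circ)$, reducing everything to bookkeeping of the $H$-actions.

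The main obstacle I anticipate is purely 2-categorical: tracking the $H$-equivariance data coherently through the construction, in particular being careful that the $H$-action on $BG^\circ$ appearing in the twist $(BG^\circ)^\sigma$ is the \emph{weak} (conjugation/outer) action — the associativity and unit 2-isomorphisms are not strict — so the pair $(Q,\Phi)$ carries higher compatibility data that must be matched against the $G$-torsor structure on the $[P/G]$ side. Verifying that the natural isomorphisms $\Psi \circ \Psi^{-1} \cong \id$ and $\Psi^{-1} \circ \Psi \cong \id$ respect all this data, rather than just being isomorphisms of the underlying torsors, is where the genuine work lies; this is exactly the kind of 2-categorical care the introduction flags as requiring "considerable work," and I would lean on the formalism of \Cref{appendix} (group actions on stacks, following \cite{ginot} and \cite{romagny:05}) to organize it. Everything else — algebraicity, the fppf-local triviality, functoriality — is routine.
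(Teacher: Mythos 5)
Your proposal follows essentially the same route as the paper: the same functor $(S \to T,\ \psi) \mapsto (S/G^{\circ}, (\Phi_1,\bar\psi))$ from $[P/G]$ to $[BG^{\circ}\times P/H]$, followed by checking it is an isomorphism via \Cref{l:isomorphism-criteria-for-stacks}(2) after an fppf base change trivializing $P$, which reduces to the untwisted identification $[H/G]\cong BG^{\circ}$. The only slip is notational: after base change $[P/G]$ becomes $[H_Y/G_Y]$ with $G$ acting through $G\to H$, not $[G_Y/G_Y]$ (which would be a point); with that correction your reduction is exactly the paper's final step.
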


\begin{proof}
    Let $T$ be a $k$-scheme.   
    A~$T$-point of $[P/G]$ consists of a pair $(R,\varphi)$ where $R\to T$ is a $G$-torsor and $\varphi\colon R\to P$ is a~$G$-equivariant map. Since $G^{\circ}$ acts trivially on $P$, we have $P/G^{\circ} \cong P$, and we see that this descends to a diagram
    $$
    \begin{tikzcd}
        R & P \\
        {R/G^\circ} & {P/G^\circ}
        \arrow["\varphi", from=1-1, to=1-2]
        \arrow[from=1-1, to=2-1]
        \arrow["\cong", from=1-2, to=2-2]
        \arrow["{\overline{\varphi}}", from=2-1, to=2-2]
    \end{tikzcd}
    $$ Note that $R/G^{\circ}$ is an $H$-torsor, and $\overline{\varphi}$ is $H$-equivariant. Moreover, $R \to R/G^\circ$ is a~$G^\circ$-torsor, and therefore we have a $R/G^\circ$-point of $BG^\circ$. The morphism $R/G^{\circ} \to BG^{\circ}$ is obviously $H$-equivariant. Combining them, we obtain an $H$-equivariant map $R/G^{\circ} \to BG^{\circ} \times P$, which yields a well-defined map
    \[
       \Psi \colon \left[P/G\right](T) \to \left[BG^{\circ}\times P/H\right](T) \colon R \mapsto R/G^{\circ}.
    \] It remains to show that $\Psi$ is an isomorphism. By part (2) of Lemma \ref{l:isomorphism-criteria-for-stacks}, it suffices to show that it is an isomorphism after base change to $\bar{k}$. 
Over $\bar{k}$, the $H$-torsor $P$ is trivial, so it suffices to show that $\left[H/G\right] \to BG^{\circ}$ is an isomorphism. 
    This is immediate.
\end{proof}

\section{The \'etale Brauer-Manin Obstruction}\label{sec:ebm}

\subsection{A review of topologizing the adelic points of a stack}

Let $\fX/k$ be a finite type algebraic stack. In this subsection we will recall how the topology on the adelic points of $\fX$ is defined in \cite[Section 2]{Dhillon:25}. 
Note that we can ``spread out'' $\fX$ over $\sO_{k,S}$, for some finite set $S\subseteq \Omega_k$. Namely, there exists an $\sO_{k,S}$-stack $\mathcal{X}$ with generic fibre $\fX$. See \cite[Theorem 4.2]{Dhillon:25}.

We further assume that $S$ is a finite subset containing all infinite places.\footnote{This is assumed in \cite[Definition 2.14]{Dhillon:25}} 
The adelic points of $\fX$ are defined as 
$$
    \fX(\bA_{k,S}) \coloneqq \prod_{v\in S} \fX(k_v) \times \prod_{v\not\in S} \mathcal{X}(\sO_{v}) 
$$ and note that this expression does not depend on the choice of $\mathcal{X}$.

\begin{definition}
    \label{d:lifting-presentation} Let notation be as above. We say $\fX$ is \emph{$S$-liftable} if the diagonal morphism $\mathcal{X} \to \mathcal{X} \times_{\sO_{k,S}} \mathcal{X}$ is separated, and there is a presentation $P\to \fX$ such that after spreading 
    out to $\mathcal{P} \to \mathcal{X}$ over $\sO_{k,S}$, we have
    \begin{enumerate}
        \item $\mathcal{P}$ is a finite type separated $\sO_{k,S}$-algebraic space,
        \item for all $T\supseteq S$ and every $s\in \fX(\bA_{k,T})$, there is a lift of $s$ to $P(\bA_{k,T})$,
        \item the induced map $\mathcal{P}(\sO_{v})\to \mathcal{X}(\sO_{v})$ is surjective for all but finitely many $v\not\in S$. 
    \end{enumerate}
    In this situation, we will call $P\to \fX$ an \emph{$S$-lifting presentation} of $\fX$. 
\end{definition}\noindent Such presentations exist for a large class of stacks. See the discussion right after \cite[Definition 2.14]{Dhillon:25}.    

Given an $S$-lifting presentation $P \to \fX$, we give $\fX(\bA_{k,S})$ the quotient topology via $f:P(\bA_{k,S})\to \fX(\bA_{k,S})$. 
One can show that this does not depend on choice of lifting presentation (see \cite[Proposition 2.17]{Dhillon:25}). 
The topology on $\fX(\bA_k)$ is then defined by the colimit of topological spaces:
$$
\fX(\bA_k) = \colim_{T\subseteq \Omega_k} \fX(\bA_{k,T}). 
$$

\subsection{The \'etale Brauer-Manin locus}

Let $f:\fX\to \fY$ be a $H$-torsor, where $H$ is a linear algebraic group over $k$. The following result from descent theory is well-known.

\begin{lemma}\label{lemma:descent}
    Let $S$ be a $k$-scheme. Let $\sigma \in H^1(S,H)$ and let $f^{\sigma}\colon \fX^{\sigma} \to \fY$ be the twisted torsor constructed in Section~\ref{subsec:Galois-twist}. Then
    \[
        \fY(S) = \coprod_{\sigma \in H^1(S,H)}f^{\sigma}(\fX^{\sigma}(S)).
    \]
\end{lemma}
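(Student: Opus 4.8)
The plan is to prove this by a standard descent/twisting argument, transported to the stack setting using the structural facts established above, principally that a $G$-torsor $\fX \to \fY$ is representable (Remark~\ref{r:basic}) and that the canonical map $G \times \fX \to \fX \times_\fY \fX$ is an isomorphism, together with the compatibility of Galois twists with base change. First I would unwind the right-hand side: a point of $f^\sigma(\fX^\sigma(S))$ is the image of a $T$-point of $\fX^\sigma$ for $T$ ranging over... no — $S$ is fixed here, so I would instead fix a point $y \in \fY(S)$ and analyze the fiber $\fX \times_\fY S$, which by the $G$-torsor axiom is an $H$-torsor over $S$ (here I write $H$ for the structure group, matching the lemma's notation; the excerpt uses $G$ in the definition but $H$ in the statement). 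Concretely, $\fX_y \coloneqq \fX \times_{\fY, y} S$ is an algebraic space over $S$ which, fppf-locally on $S$, is the trivial $H$-torsor; hence it defines a class $[\fX_y] \in H^1(S, H)$.

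The key steps, in order: (1) Show the right-hand union is a \emph{disjoint} union, i.e. that for $\sigma \neq \sigma'$ the images $f^\sigma(\fX^\sigma(S))$ and $f^{\sigma'}(\fX^{\sigma'}(S))$ are disjoint. For this I would show that if $y \in f^\sigma(\fX^\sigma(S))$ then $\sigma$ is determined by $y$ — namely $\sigma = [\fX^\sigma_y]$ where $\fX^\sigma_y$ is the fiber — so $y$ lies in the image for at most one $\sigma$. The point is that if $\tilde y \in \fX^\sigma(S)$ lifts $y$, then $\tilde y$ gives an $S$-point of the $H$-torsor $\fX^\sigma_y \to S$, trivializing it, and one computes that the class of $\fX^\sigma_y$ is $\sigma \cdot [\fX_y]$ by the twisting formula; running this with the untwisted torsor (where a lift exists iff the fiber is trivial) pins down the bookkeeping. (2) Show every $y \in \fY(S)$ lies in \emph{some} $f^\sigma(\fX^\sigma(S))$: take $\sigma \coloneqq [\fX_y]^{-1} \in H^1(S,H)$ (or its appropriate inverse/left-right conversion), and check that the fiber $\fX^\sigma_y = \fX_y \times^H P_\sigma$ — where $P_\sigma$ is a torsor representing $\sigma$ — becomes trivial, hence has an $S$-point $\tilde y$, and $f^\sigma(\tilde y) = y$. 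Here I would use the Remark that twisting commutes with base change, so $\fX^\sigma_y \cong (\fX \times_\fY S) \times^H P_\sigma$, reducing the computation to the classical statement that twisting a torsor by its own inverse class yields the trivial torsor.

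The main obstacle I expect is bookkeeping with left versus right torsors and the precise cohomological identity $[\fX^\sigma_y] = \sigma \ast [\fX_y]$ (for the correct group operation $\ast$ on $H^1(S,H)$, which is not literally multiplication when $H$ is nonabelian but rather the torsion of the cohomology set over the class $[\fX_y]$ as a base point). The excerpt's conventions — $\fX$ carries a \emph{right} $G$-action while $F$ (and $P$) carry \emph{left} actions, and $\fX \times^G F = [\fX \times F / G]$ with $G$ acting via the inverse on $F$ — need to be tracked carefully so that the twisting really does implement the simply-transitive action of $H^1(S,H)$ (based at $[\fX_y]$) on itself. Once the correct identity is in place, steps (1) and (2) are formal. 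I would also remark that although $H$ is taken to be a linear algebraic group, in the application $H$ is finite étale, but the proof does not need finiteness: only that $\fX \to \fY$ is an $H$-torsor in the sense of Definition~\ref{d:stack-torsor}, so that the fibers over $S$-points are honest $H$-torsors over $S$ classified by $H^1(S,H)$, and that every class in $H^1(S,H)$ is represented by a torsor $P$ over (the base field, pulled back to) $S$ so that $\fX^\sigma$ makes sense; the latter is where the hypothesis that $P$ be defined over $k$ in Section~\ref{subsec:Galois-twist} gets used, via pullback along $S \to \Spec k$.
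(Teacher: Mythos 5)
Your proposal is correct and is essentially the paper's own argument: the paper proves this lemma by a one-line citation of Poonen, Theorem 8.4.1 (descent by twisting), remarking that the proof carries over verbatim to stacks, and your fiber-class/twisting bookkeeping --- using representability of $f$ from Remark~\ref{r:basic} and the compatibility of twists with base change --- is precisely that classical proof transported to the stack setting. The only small imprecision is your closing remark that every class in $H^1(S,H)$ is represented by a torsor pulled back from $k$; in general it is not, and for such $\sigma$ one should perform the twist of Section~\ref{subsec:Galois-twist} over $S$ itself (i.e.\ twist $\fX\times_{\Spec k}S\to\fY\times_{\Spec k}S$ by an $H$-torsor over $S$), an ambiguity already present in the statement as given in the paper.
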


\begin{proof}
    See \cite[Theorem 8.4.1]{poonen:17}. The proof remains the same by replacing varieties by stacks.
\end{proof}

Let $f$ be as above. Suppose further that $H$ is a finite \'etale group scheme. We let 
\begin{equation}\label{eq:f-locus}
    \fY(\bA_k)^{f,Br} = \coprod_{\sigma \in H^1(k,H)} f^\sigma(\fX^{\sigma}(\bA_k)^{Br}). 
\end{equation}

If $\fY$ is a finite type algebraic stack, we define the \emph{\'etale Brauer-Manin locus} by 
$$
\fY(\bA_k)^{\et,Br} =\bigcap_{H}  \bigcap_{\substack{\fX\overset{f}{\to} \fY \\ H\textnormal{-torsor}}}\fY(\bA_k)^{f,Br},
$$ where $H$ is taken over all finite \'etale group schemes over $k$. 
In the rest of this section, we show the \'etale Brauer-Manin locus is closed in $\fY(\bA_k)$.

\begin{proposition}\label{p:proper}
    Let $f:\fX\to \fY$ be a proper representable morphism of stacks. If $\fY$ has an~$S$-lifting presentation by a separated algebraic space, then 
    the induced morphism on adelic points is topologically proper. In particular, $f:\fX(\bA_{k}) \to \fY(\bA_{k})$ is closed.
\end{proposition}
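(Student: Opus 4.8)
The plan is to reduce the statement about adelic points to a statement about the individual local factors $\fX(k_v) \to \fY(k_v)$ and the integral factors $\mathcal{X}(\sO_v) \to \mathcal{Y}(\sO_v)$, then assemble these via the definition of the topology on adelic points recalled in the previous subsection. First I would spread out: since $f$ is proper and representable and $\fY$ has an $S$-liftable presentation by a separated algebraic space, we may choose a model $\mathcal{Y}$ over $\sO_{k,S}$ together with a separated algebraic space presentation $\mathcal{P} \to \mathcal{Y}$, and a model $\mathcal{X}$ of $\fX$ over $\sO_{k,S}$ (enlarging $S$ if necessary) so that $f$ spreads out to a proper representable morphism $\mathcal{X} \to \mathcal{Y}$. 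Pulling back along $\mathcal{P} \to \mathcal{Y}$, we get a proper morphism of separated algebraic spaces $\mathcal{Q} := \mathcal{X} \times_{\mathcal{Y}} \mathcal{P} \to \mathcal{P}$, and $\mathcal{Q} \to \mathcal{X}$ is again an $S$-liftable presentation (base change of a lifting presentation along the representable $f$; one should check the two conditions of Definition~\ref{d:lifting-presentation} survive, using properness for the valuative-criterion part of condition~(2)).

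Next I would establish topological properness at each place. For $v \in S$: by the valuative criterion of properness, $\mathcal{Q}(\sO_v) \to \mathcal{Q}(k_v) \times_{\mathcal{P}(k_v)} \mathcal{P}(\sO_v)$ is a bijection, and the map on $k_v$-points $\mathcal{Q}(k_v) \to \mathcal{P}(k_v)$ is proper as a map of topological spaces because a proper morphism of finite-type $k_v$-schemes (or separated algebraic spaces) induces a proper map on $k_v$-analytic points — here one uses separatedness to get Hausdorffness and properness for the compactness of fibres and closedness. Passing to the quotient topology, since $P(\bA_{k,S}) \to \fX(\bA_{k,S})$ and $Q$-analogue are the defining quotient maps, and properness is compatible with the relevant fibre-product descriptions, one deduces $\fX(k_v) \to \fY(k_v)$ is topologically proper. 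For $v \notin S$: $\mathcal{X}(\sO_v) \to \mathcal{Y}(\sO_v)$ is proper because $\mathcal{X}(\sO_v)$ is compact (it is a closed subset of $\mathcal{P}(\sO_v)$, itself compact as $\mathcal{P}$ is of finite type over the compact $\sO_v$, using the quotient topology and that $\mathcal{Q}(\sO_v)$ surjects onto it) and $\mathcal{Y}(\sO_v)$ is Hausdorff. A continuous map from a compact space to a Hausdorff space is automatically proper and closed.

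Then I would assemble: $\fX(\bA_{k,T}) = \prod_{v \in T}\fX(k_v) \times \prod_{v \notin T}\mathcal{X}(\sO_v)$ maps to the analogous product for $\fY$, and a product of proper maps is proper (for the infinitely many integral factors this is where compactness of $\mathcal{X}(\sO_v)$ for almost all $v$, together with condition~(2) of $S$-liftability, is essential — the restricted product behaves well precisely because almost all factors are compact). Hence $\fX(\bA_{k,T}) \to \fY(\bA_{k,T})$ is topologically proper for every finite $T \supseteq S$. Finally, taking the colimit over $T$, properness is preserved: a preimage of a compact set $K \subseteq \fY(\bA_k)$ lies in some $\fY(\bA_{k,T})$ (as $K$ is compact and the $\fY(\bA_{k,T})$ form an open cover up to the colimit topology), and its preimage is compact by the finite-level statement, giving that $\fX(\bA_k) \to \fY(\bA_k)$ is topologically proper, hence closed.

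\textbf{Main obstacle.} The hardest point is the passage from properness of scheme-theoretic (or algebraic-space) morphisms to topological properness of the induced maps on $k_v$-points and $\sO_v$-points \emph{after} taking the quotient topology via the lifting presentation — one must verify that the quotient-topology construction of $\fX(\bA_{k,S})$ from $P(\bA_{k,S})$ interacts correctly with fibre products and preserves properness, which requires knowing the quotient maps are themselves topologically nice (e.g. open or at least that the fibre-product description descends). A secondary technical point is checking that the base-changed presentation $\mathcal{Q} \to \mathcal{X}$ genuinely remains $S$-liftable, where properness (valuative criterion) is what rescues condition~(2); and one must be slightly careful that ``proper representable'' plus ``separated space presentation on the base'' yields a separated space presentation on the source, so that the analytic topologies in play are Hausdorff.
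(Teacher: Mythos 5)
Your overall strategy — disassemble the adelic space into local factors $\fX(k_v)$ and $\mathcal{X}(\sO_v)$, prove properness factor by factor, then reassemble — does not match how the topology on $\fX(\bA_{k,T})$ is actually defined, and this is where the argument breaks. The topology is the \emph{quotient} topology induced by a single map $P(\bA_{k,T})\to \fX(\bA_{k,T})$ from the adelic points of an $S$-liftable presentation; it is not defined as (and is not obviously homeomorphic to) the product of the quotient topologies on the individual factors, since quotient maps do not commute with infinite products. So the step ``a product of proper maps is proper, hence $\fX(\bA_{k,T})\to\fY(\bA_{k,T})$ is proper'' has no purchase on the topology in question. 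Moreover, the point you yourself single out as the main obstacle — descending topological properness through the quotient maps — is precisely the content of the proposition, and your proposal does not supply it; you only note that it ``requires knowing the quotient maps are topologically nice (e.g.\ open).'' In fact no openness (and no Hausdorffness) is needed: the paper's proof stays entirely at the adelic level. One base changes the $S$-liftable presentation $p:Y\to\fY$ along $f$ to get $q:Y\times_\fY\fX\to\fX$, which is again an $S$-liftable presentation by a separated algebraic space (separatedness uses properness of $f$, representability makes it a space; no valuative-criterion argument is needed for liftability, which is just the fibre-product property). Then Conrad's result on adelic points of separated algebraic spaces gives that $g:(Y\times_\fY\fX)(\bA_{k,T})\to Y(\bA_{k,T})$ is topologically closed, and for a closed $Z\subseteq\fX(\bA_{k,T})$ the set-theoretic identity $p^{-1}(f(Z))=g(q^{-1}(Z))$, valid because the square is cartesian and $\fX(\bA_{k,T})$ consists of isomorphism classes, shows $p^{-1}(f(Z))$ is closed, hence $f(Z)$ is closed since $p$ is a quotient map. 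The colimit topology then finishes the argument. This identity is the missing idea in your write-up.

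Two further claims in your local analysis are unjustified and would need repair even if the assembly problem were fixed. First, you use that $\mathcal{Y}(\sO_v)$ is Hausdorff to conclude ``compact to Hausdorff is proper and closed''; but these are quotient topologies on isomorphism classes of stack points, and there is no reason for them to be Hausdorff (the paper nowhere needs this). Second, your compactness claim for $\mathcal{X}(\sO_v)$ relies on surjectivity of $\mathcal{Q}(\sO_v)\to\mathcal{X}(\sO_v)$, which condition (2) of Definition~\ref{d:lifting-presentation} guarantees only for all but finitely many $v\notin S$, and your parenthetical description of $\mathcal{X}(\sO_v)$ as ``a closed subset of $\mathcal{P}(\sO_v)$'' conflates the presentation of $\fY$ with a quotient of $\mathcal{Q}(\sO_v)$. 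I recommend abandoning the place-by-place reduction and running the quotient/cartesian-square argument directly on $\bA_{k,T}$-points.
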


\begin{proof}
        Consider an $S$-lifting presentation $p:Y\to \fY$ with $Y$ a separated algebraic space. Then $q:Y\times_{\fY}\fX\to \fX$ is an $S$-lifting presentation 
        and $Y\times_{\fY}\fX$ is a separated algebraic in view of the properness of $f$ by \cite[Lemma 2.16]{Dhillon:25}. Consider finite $T\subseteq \Omega_{k}$ with $S\subseteq T$ and the 
        Cartesian diagram:
        \begin{center}
            \begin{tikzcd}
                Y\times_{\fY}\fX  \ar[r, "g"] \ar[d,"q"] & Y \ar[d,"p"] \\
                \fX \ar[r, "f"]& \fY. 
            \end{tikzcd}
        \end{center}
        By \cite[Proposition 5.8]{conrad2020}, the morphism $g$ is topologically closed on $\bA_{k,T}$-points. 
        Let $Z\subseteq \fX(\bA_{k,T})$ be a closed subset. Then $q^{-1}(Z)$ is closed. Then we have 
        $g(q^{-1}(Z)) = p^{-1}(f(Z))$ and it follows that $f(Z)$ is closed. 
        The result follows from the construction of the colimit topology. 
\end{proof}

\begin{remark}\label{r:torsor}
    Suppose that $f:\fX\to\fY$ is a $H$-torsor and $\fY$ has an $S$-lifting 
    presentation. Then the morphism $f$ satisfies the hypothesis of the proposition, so does each twist 
    $f^{\sigma}$ for $\sigma\in H^1(k,H)$. 

    The assertion for $f$ follows from \Cref{r:basic}. For the Galois twists, one applies \Cref{p:inner}. 
\end{remark}

    \begin{proposition} \label{p:finite} Suppose $H$ is a finite linear algebraic group over $k$.
    Let $T \subseteq \Omega_k$ be a finite subset containing all infinite places. 
    Then the set 
    $$
    \mathcal{T} \coloneqq \{ \sigma\in H^1(k,H) \mid f^\sigma(\fX^\sigma (\bA_k))\cap \fY(\bA_{k,T})\not= \emptyset\}
    $$
    is finite. 
\end{proposition}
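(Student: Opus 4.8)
The strategy is to show that every $\sigma$ occurring in the displayed set is unramified outside a fixed finite set of places, and hence lies in the image of $H^1(\sO_{k,S_0},\mathcal H)\to H^1(k,H)$ for a suitable finite enlargement $S_0\supseteq S$ and a finite \'etale model $\mathcal H$ of $H$; since that image is finite by Hermite--Minkowski, the set in the statement is finite.

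I would first spread out. Enlarging $S$ to a finite set $S_0$, choose a model $\mathcal Y$ of $\fY$ over $\sO_{k,S_0}$ that for every $v\notin S_0$ has $\mathcal Y(\sO_v)$ equal to the $v$-component of $\fY(\bA_{k,S})$ (take the given model over $\sO_{k,S}$ and restrict it along the open immersion $\Spec\sO_{k,S_0}\hookrightarrow\Spec\sO_{k,S}$), a finite \'etale group scheme $\mathcal H$ over $\sO_{k,S_0}$ with generic fibre $H$, and, after a further enlargement of $S_0$, an $\mathcal H$-torsor $\mathcal X\to\mathcal Y$ with generic fibre $f\colon\fX\to\fY$. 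Existence of the stacky model is \cite[Theorem 4.2]{Dhillon:25}, and the remaining spreading-out is standard.

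Now fix $\sigma$ in the set in the statement together with a point $(x_v)_v\in\fX^\sigma(\bA_k)$ whose image $(y_v)_v:=f^\sigma\big((x_v)_v\big)$ lies in $\fY(\bA_{k,S})$; by the choice of $\mathcal Y$ we have $y_v\in\mathcal Y(\sO_v)$ for every $v\notin S_0$. Fix such a $v$ and let $\sigma_v\in H^1(k_v,H)$ be the restriction of $\sigma$. Since $x_v$ is a $k_v$-point of $\fX^{\sigma_v}$ lying over $y_v$, Lemma~\ref{lemma:descent} applied to $f\colon\fX\to\fY$ and the $k$-scheme $\Spec k_v$ places $y_v$ in the summand $f^{\sigma_v}\big(\fX^{\sigma_v}(k_v)\big)$ of the disjoint decomposition of $\fY(k_v)$. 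On the other hand, Lemma~\ref{lemma:descent} applied to the $\mathcal H$-torsor $\mathcal X\to\mathcal Y$ over $\sO_{k,S_0}$ and the $\sO_{k,S_0}$-scheme $\Spec\sO_v$ places the integral point $y_v$ in $f^{\rho}\big(\mathcal X^{\rho}(\sO_v)\big)$ for some $\rho\in H^1(\sO_v,\mathcal H)$. Restricting the latter to the generic point $\Spec k_v$ and using that Galois twists and the decomposition of Lemma~\ref{lemma:descent} are compatible with base change, $y_v$ also lies in $f^{\rho_v}\big(\fX^{\rho_v}(k_v)\big)$, where $\rho_v$ denotes the image of $\rho$ in $H^1(k_v,H)$; disjointness of the decomposition of $\fY(k_v)$ then forces $\sigma_v=\rho_v$. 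Hence $\sigma_v$ lies in the image of $H^1(\sO_v,\mathcal H)\to H^1(k_v,H)$ for every $v\notin S_0$, i.e.\ $\sigma$ is unramified outside $S_0$.

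Two standard facts then finish the argument. A class of $H^1(k,H)$ that is unramified outside $S_0$ extends to an $\mathcal H$-torsor over $\sO_{k,S_0}$: taking the normalisation of $\Spec\sO_{k,S_0}$ in the finite \'etale $k$-scheme representing the class, the unramifiedness hypothesis makes this normalisation finite \'etale over $\sO_{k,S_0}$, and fppf descent (as in Lemma~\ref{l:isomorphism-criteria-for-stacks}) transports the torsor structure. And $H^1(\sO_{k,S_0},\mathcal H)$ is finite: such a torsor is a finite \'etale $\sO_{k,S_0}$-scheme of degree bounded by the order of $\mathcal H$, and there are only finitely many such schemes because there are only finitely many number fields of bounded degree unramified outside $S_0$ (Hermite--Minkowski). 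The set in the statement is therefore contained in the image of the finite set $H^1(\sO_{k,S_0},\mathcal H)$ under $H^1(\sO_{k,S_0},\mathcal H)\to H^1(k,H)$, hence is finite. I expect the main point requiring care to be the third paragraph: tracking the abstract class $\sigma$ through the local decompositions of Lemma~\ref{lemma:descent} in the non-abelian setting, and verifying the compatibility of the decompositions over $\sO_v$ and over $k_v$; the spreading out and the Hermite--Minkowski finiteness are routine.
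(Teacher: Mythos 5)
Your proposal is correct and follows essentially the same route as the paper's proof: spread out $H$, $\fX\to\fY$ to integral models, use the integrality of the $v$-components of the adelic point for $v$ outside a finite set to show that $\sigma$ is unramified there (i.e.\ lies in the image of $H^1(\sO_v,\mathcal H)\to H^1(k_v,H)$), and conclude finiteness by Hermite--Minkowski. The only difference is presentational: you justify the unramifiedness step in detail via Lemma~\ref{lemma:descent} over $\sO_v$ and $k_v$, where the paper asserts it briefly and cites Lang for the finiteness of the relevant unramified extensions.
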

\begin{proof} 
The argument is essentially from \cite[Proposition 4.4]{harari02}. Indeed, we can ``spread out''~$H$ to 
an $\sO_{k,S}$-group scheme~$\mathcal{H}$, $\fX$ to an $\sO_{k,S}$-stack $\mathcal{X}$ and $\fY$ to an 
$\mathcal{H}$-torsor $\mathcal{Y}$, where $S \subseteq \Omega_k$ is a finite set containing $T$.
For any $\sigma$ in the set 
$$
\mathcal{S} \coloneqq \{ \sigma\in H^1(k,H) \mid f^\sigma(\fX^\sigma (\bA_k))\cap \fY(\bA_{k,S})\not= \emptyset\},
$$ and any $v \not\in S$, the pullback of $f^{\sigma}$ along $\Spec(k_v)\to \fY$ is an $H_{k_v}$-torsor 
that corresponds to $\sigma_v \in H^1(k_v, H)$. Moreover, $\sigma_{v}$ must be in the image of the 
natural map $H^1(\sO_v, \mathcal{H}) \to H^1(k_v, H)$. This implies that 
$\sigma$ must be unramified at all $v \not \in S$. Since $\sigma$ corresponds to an \'etale $k$-algebra, 
which is essentially a product of finite separable extensions of $k$, this implies that these fields 
extensions are unramified over $v\not\in S$. By \cite[V.4 Theorem 5]{lang}, the collection of such 
unramified field extensions is finite, and therefore the set $\mathcal{S}$ is finite as well. 

Moreover, since for $T\subseteq S$, the natural map $\fY(\bA_{k,T})\subseteq \fY(\bA_{k, S})$ is an inclusion, we must have that $\mathcal{T}$ is finite as well.
\end{proof}

\begin{proposition}\label{p:f-Br-closed} 
    The set $\fY(\bA_k)^{f, Br}$ defined in (\ref{eq:f-locus}) is a closed subset of $\fY(\bA_k)$. 
\end{proposition}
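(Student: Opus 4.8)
The plan is to write $\fY(\bA_k)^{f,Br}$ as a finite union of images of closed sets under proper maps, and then invoke the colimit topology. Recall from \eqref{eq:f-locus} that
\[
    \fY(\bA_k)^{f,Br} = \coprod_{\sigma \in H^1(k,H)} f^\sigma\bigl(\fX^{\sigma}(\bA_k)^{Br}\bigr).
\]
First I would fix a finite set $S$ of places containing all infinite places such that $\fY$ has an $S$-liftable presentation by a separated algebraic space (and simultaneously spread $H$, $\fX$, $\fY$ out as in \Cref{p:finite}). By \Cref{p:finite}, only finitely many $\sigma \in H^1(k,H)$ contribute a nonempty intersection with $\fY(\bA_{k,S})$; call this finite set $\Sigma_S$. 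Since $\fY(\bA_k)$ carries the colimit topology over the $\fY(\bA_{k,T})$ for $T \supseteq S$, and a subset is closed in the colimit iff its intersection with each $\fY(\bA_{k,T})$ is closed, it suffices to show that $\fY(\bA_k)^{f,Br} \cap \fY(\bA_{k,T})$ is closed for every such $T$; and for each fixed $T$ only the finitely many $\sigma \in \Sigma_T$ matter.

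Next I would handle each individual term $f^\sigma(\fX^\sigma(\bA_k)^{Br})$. The Brauer--Manin set $\fX^\sigma(\bA_k)^{Br}$ is closed in $\fX^\sigma(\bA_k)$: it is the intersection, over $\alpha \in \Br(\fX^\sigma)$, of the preimages of $\{0\}$ under the continuous evaluation pairings $\fX^\sigma(\bA_k) \to \QQ/\ZZ$, $x \mapsto \langle x, \alpha\rangle$ (here I use that $\QQ/\ZZ$ is discrete and the local invariant maps assemble to a continuous function on adelic points, as in the standard theory). Then I would apply \Cref{r:torsor}: since $f : \fX \to \fY$ is an $H$-torsor with $H$ finite \'etale and $\fY$ has an $S$-liftable presentation, each twist $f^\sigma : \fX^\sigma \to \fY$ is proper and representable, and $\fY$ has an $S$-liftable presentation by a separated algebraic space, so \Cref{p:proper} gives that $f^\sigma$ is closed on $\bA_k$-points (indeed topologically proper). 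Hence $f^\sigma(\fX^\sigma(\bA_k)^{Br})$ is closed in $\fY(\bA_k)$.

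Finally I would assemble: on each $\fY(\bA_{k,T})$, the set $\fY(\bA_k)^{f,Br} \cap \fY(\bA_{k,T})$ equals $\bigcup_{\sigma \in \Sigma_T} \bigl(f^\sigma(\fX^\sigma(\bA_k)^{Br}) \cap \fY(\bA_{k,T})\bigr)$, a \emph{finite} union of closed sets, hence closed. Therefore $\fY(\bA_k)^{f,Br}$ is closed in the colimit topology on $\fY(\bA_k)$. The main obstacle I anticipate is the bookkeeping needed to make the finiteness reduction interact cleanly with the colimit topology --- one must check that restricting to $\fY(\bA_{k,T})$ really does kill all but finitely many twists (so that the coproduct becomes a finite union at each finite level) and that properness of $f^\sigma$ on adelic points, which \Cref{p:proper} states for $\bA_k$-points via the colimit construction, is compatible with intersecting with the closed-at-each-level description; the continuity of the Brauer pairing on adelic points of a stack is used as a black box from \cite{Dhillon:25}.
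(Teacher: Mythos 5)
Your argument is correct and follows essentially the same route as the paper: closedness of each piece $f^\sigma(\fX^\sigma(\bA_k)^{Br})$ via \Cref{p:proper} (with \Cref{r:torsor} supplying the hypotheses for the twists), finiteness of the relevant twists at each finite level via \Cref{p:finite}, and the reduction to levels $\fY(\bA_{k,T})$ via the colimit topology. The only additions are points the paper leaves implicit (closedness of the Brauer--Manin set from continuity of the pairing, and the explicit hypothesis check for $f^\sigma$), so there is nothing to change.
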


\begin{proof}
    By  \Cref{p:proper}, each of the sets $f^\sigma(\fX^{\sigma}(\bA_k)^{Br})$ are closed. 
    Since $\fY(\bA_k) = \colim\fY(\bA_{k,T})$, to show $\fY(\bA_k)^{f,Br}$ is closed, it suffices to show that
    for every finite $S\subseteq \Omega_k$ containing all infinite places,  $\fY(\bA_{k,T}) \cap \fY(\bA_k)^{f,Br}$ is closed 
    in $\fY(\bA_{k,T})$. To see this, we observe:
    $$
    \fY(\bA_{k,T}) \cap \fY(\bA_k)^{f,Br} = \coprod_{\sigma \in H^1(k,H)} (\fY(\bA_{k,T}) \cap f^\sigma(\fX^{\sigma}(\bA_k)^{Br})).
    $$ 
    By \Cref{p:finite}, this is a finite union of closed sets and is thus closed.
\end{proof}

\begin{corollary}\label{cor:et-BM-locus-closed}
    The \'etale Brauer-Manin locus is closed. 
\end{corollary}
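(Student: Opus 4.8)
The plan is to observe that \Cref{cor:et-BM-locus-closed} is a purely formal consequence of \Cref{p:f-Br-closed}. By definition,
$$
\fY(\bA_k)^{\et,Br} = \bigcap_{\substack{\fX\overset{f}{\to} \fY \\ H\textnormal{-torsor}}}\fY(\bA_k)^{f,Br},
$$
and \Cref{p:f-Br-closed} asserts that each member $\fY(\bA_k)^{f,Br}$ of this intersection is closed in $\fY(\bA_k)$. Since an arbitrary intersection of closed subsets of a topological space is closed — passing to complements turns it into a union of open sets — we conclude immediately that $\fY(\bA_k)^{\et,Br}$ is closed.

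The one point to address is set-theoretic: the index ranges over all finite \'etale group schemes $H/k$ together with all $H$-torsors $f\colon\fX\to\fY$, which a priori is a proper class rather than a set. This causes no trouble, because the complement argument is insensitive to the size of the indexing family: $\fY(\bA_k)\setminus\fY(\bA_k)^{\et,Br}$ is the union of the open sets $\fY(\bA_k)\setminus\fY(\bA_k)^{f,Br}$, and a union of open sets is open regardless of how many there are. Alternatively, one may note that up to isomorphism there is only a set's worth of such torsors, so the intersection can be taken over a set of representatives; either way the statement is unaffected.

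Consequently there is no real obstacle in the corollary itself: all of the substance lies upstream, in \Cref{p:proper} (properness of the relevant morphisms on adelic points), \Cref{p:finite} (only finitely many twists meet any $\fY(\bA_{k,S})$), and their combination in \Cref{p:f-Br-closed}, which together guarantee that each individual $f$-locus is closed. The corollary merely records that intersecting these closed sets preserves closedness.
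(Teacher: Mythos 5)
Your proof is correct and matches the paper's argument: the paper also deduces the corollary directly from \Cref{p:f-Br-closed}, the only content being that an arbitrary intersection of closed sets is closed. Your additional remark on the indexing family being (up to isomorphism) a set is a harmless and reasonable clarification.
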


\begin{proof}
    This follows directly from Proposition \ref{p:f-Br-closed}.
\end{proof}

\begin{remark}\label{r:cancel_S}
    In \Cref{p:finite}, we may replace $\bA_k$ and $\bA_{k, T}$ by $\bA_{\cancel{S}, k}$ and $\bA_{\cancel{S}, k, U}$, respectively, for a finite set $S\subseteq \Omega_k$ containing all infinite places and a finite set $U \subseteq \Omega_k$ containing~$S$ and prove an analogous result. Therefore, one may show that the set $\fY(\bA_{\cancel{S},k})^{f, Br}$ is a closed subset of $\fY(\bA_{\cancel{S},k})$ as in \Cref{p:f-Br-closed}. 
\end{remark}

\subsection{The \'etale Brauer-Manin obstruction}

\begin{lemma}\label{l:closed}
    Let $\fY$ be an algebraic stack of finite type. Then the \'etale Brauer-Manin locus $\fY(\bA_k)^{\et, \Br}$ contains the closure of the diagonal image of $\fY(k) \to \fY(\bA_k)^{\et, \Br}$. That is, the \'etale Brauer-Manin locus captures all $k$-rational points of $\fY$.
\end{lemma}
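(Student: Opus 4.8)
The plan is to prove the \emph{a priori} stronger statement that the image of $\fY(k)$ under the natural (diagonal) map to $\fY(\bA_k)$ already lies in $\fY(\bA_k)^{\et,\Br}$. Since $\fY(\bA_k)^{\et,\Br}$ is closed by \Cref{cor:et-BM-locus-closed}, the closure of this image is then automatically contained in it; in particular the map $\fY(k)\to\fY(\bA_k)^{\et,\Br}$ in the statement is well defined and its closure lands where claimed. So the whole content is the pointwise inclusion $\fY(k)\hookrightarrow\fY(\bA_k)^{\et,\Br}$.

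Next I would fix $y\in\fY(k)$ and an arbitrary $H$-torsor $f\colon\fX\to\fY$ with $H$ a finite \'etale group scheme over $k$. Applying \Cref{lemma:descent} with base scheme $\Spec k$ gives
$$\fY(k)=\coprod_{\sigma\in H^1(k,H)}f^{\sigma}\bigl(\fX^{\sigma}(k)\bigr),$$
so $y=f^{\sigma}(x)$ for some class $\sigma$ and some $x\in\fX^{\sigma}(k)$. It then suffices to check that the adelic point attached to $x$ lies in the Brauer-Manin locus $\fX^{\sigma}(\bA_k)^{\Br}$: granting this, $y=f^{\sigma}(x)\in f^{\sigma}\bigl(\fX^{\sigma}(\bA_k)^{\Br}\bigr)\subseteq\fY(\bA_k)^{f,\Br}$, and since $f$ was arbitrary among finite \'etale torsors we conclude $y\in\bigcap_f\fY(\bA_k)^{f,\Br}=\fY(\bA_k)^{\et,\Br}$.

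To verify the displayed claim I would run the standard reciprocity argument, applied to the stack $\mathfrak Z:=\fX^{\sigma}$ and its $k$-point $z:=x$. Spreading $z$ out over $\sO_{k,S}$ for a suitable finite $S$ containing the infinite places shows that the family of base changes $(z_v)_v$, with $z_v\in\mathfrak Z(k_v)$, is an adelic point lying in $\mathfrak Z(\bA_{k,S})\subseteq\mathfrak Z(\bA_k)$. For any $\beta\in\Br(\mathfrak Z)$, the Brauer-Manin pairing computes
$$\bigl\langle(z_v)_v,\beta\bigr\rangle=\sum_v\inv_v\bigl(z_v^{*}\beta\bigr)=\sum_v\inv_v\bigl((z^{*}\beta)\otimes_k k_v\bigr)=0,$$
where the last equality is the reciprocity sequence $0\to\Br(k)\to\bigoplus_v\Br(k_v)\xrightarrow{\ \sum\inv_v\ }\QQ/\ZZ\to 0$ of global class field theory applied to $z^{*}\beta\in\Br(k)$. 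Hence $(z_v)_v\in\mathfrak Z(\bA_k)^{\Br}$, as needed.

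The hard part is really the last paragraph: one must have available a Brauer-Manin pairing $\mathfrak Z(\bA_k)\times\Br(\mathfrak Z)\to\QQ/\ZZ$ for an algebraic stack $\mathfrak Z$ which is functorial with respect to the morphisms $f^{\sigma}$ and to base change along $k\hookrightarrow k_v$, so that its restriction to (the adelic image of) a $k$-point factors through $z^{*}\colon\Br(\mathfrak Z)\to\Br(k)$; with that in hand the vanishing is pure class field theory. Everything else --- the reduction to a single torsor via \Cref{lemma:descent} and the passage from the pointwise containment to the closure via \Cref{cor:et-BM-locus-closed} --- is routine.
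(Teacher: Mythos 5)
Your proposal is correct and follows essentially the same route as the paper: reduce via \Cref{lemma:descent} to showing each rational point of a twist lies in its Brauer--Manin locus (the paper quotes this as the known fact that $\fX(\bA_k)^{\Br}$ contains the image of $\fX(k)$, which your reciprocity computation simply spells out), and then pass to the closure using \Cref{cor:et-BM-locus-closed}.
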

\begin{proof}
    The fact that the diagonal image of $\fY(k) \to \fY(\bA_k)$ is contained in $\fY(\bA_k)^{\et, \Br}$ follows immediately from Lemma \ref{lemma:descent} and the fact that the Brauer-Manin locus $\fX(\bA_k)^{\Br}$ contains the diagonal image of $\fX(k)\to \fX(\bA_k)$ for any algebraic stack $\fX$. 
    By Corollary~\ref{cor:et-BM-locus-closed},~$\fY(\bA_k)^{\et, \Br}$  is closed, so it contains the closure of $\fY(k)$.
\end{proof}

    \begin{remark}
        By \Cref{r:cancel_S}, we may replace $\bA_k$ by $\bA_{\cancel{S},k}$ for a finite set $S\subseteq \Omega_k$ containing all infinite places and prove an analogous result in \Cref{l:closed}.
    \end{remark}

Now we are ready to define the \'etale Brauer-Manin obstruction for strong approximation.

\begin{definition}\label{def:ebm-obstruction-off-S}
    Let $\fY$ be an algebraic stack of finite type. We say 
    \emph{strong approximation for $\fY$ off a finite set of places $S$ holds with respect to the \'etale Brauer-Manin obstruction} 
    if the image of the natural map $\fY(k) \to \fY(\bA_{\cS,k})^{\et, \Br}$ is dense.    
\end{definition}

\section{The main result}\label{sec:maintheorem}

Recall our standing notation 
$$
1\to G^{\circ}\to G \to H \to 1
$$
where $G$ is a linear algebraic group and $G^{\circ}$
is the connected component of the identity. 

\begin{lemma}
    \label{l:homogeneous}
    Let $P$ be a $H$-torsor over $k$. We think of $P$ as a homogeneous space over $G$. Let $G\hookrightarrow \sl_n$ be a faithful representation. 
    Then $P\times^{G}\sl_{n}$ is a homogeneous space for $\sl_n$ with connected stabilizer. 
\end{lemma}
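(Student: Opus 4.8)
The plan is to reduce the statement to an explicit orbit computation over a splitting field, exploiting that all the properties in question are geometric. First I would record that, since $G \hookrightarrow \sl_n$ is a closed immersion, $G$ acts \emph{freely} on $\sl_n$ by left translation, and hence freely on $P \times \sl_n$ (the group acting through $G \to H$ on the first factor and, via the usual inverse convention of \Cref{subsec:Galois-twist}, by translation on the second). Consequently $Z \coloneqq P \times^G \sl_n = [P \times \sl_n / G]$ is representable by an algebraic space, with $P \times \sl_n \to Z$ a $G$-torsor, and $Z$ is nonempty because $P$ is. Moreover, the \emph{right} translation action of $\sl_n$ on the second factor commutes with the $G$-action, so it descends to a right $\sl_n$-action on $Z$; this is the action with respect to which I claim $Z$ is homogeneous.

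Next I would base change to a splitting field. Since $P$ is a torsor under the finite \'etale group scheme $H = G/G^{\circ}$, there is a finite separable (indeed Galois) extension $k'/k$ with $P_{k'} \cong H_{k'}$, on which $G_{k'}$ acts through $G_{k'} \twoheadrightarrow H_{k'}$ by translation. Over $k'$ one checks directly that every $G$-orbit in $H \times \sl_n$ meets the slice $\{e\} \times \sl_n$, and that the residual stabilizer of $G$ on this slice is exactly $G^{\circ}$, which yields a canonical $\sl_n$-equivariant isomorphism
$$
Z_{k'} \;\cong\; G^{\circ}_{k'} \backslash \sl_{n,k'},
$$
the right-hand side being the quotient of $\sl_n$ by left translation by $G^{\circ}$. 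The scheme $G^{\circ} \backslash \sl_n$ is quasi-projective, is homogeneous under right translation by $\sl_n$, and the stabilizer of the base coset is $G^{\circ}$; since $G^{\circ}$ is connected, all geometric stabilizers of the $\sl_n$-action on $Z_{k'}$ are connected.

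Finally I would descend from $k'$ to $k$. The conditions ``$Z$ is nonempty and $\sl_n$ acts transitively on $Z$'' and ``the geometric stabilizers are connected'' are insensitive to extension of the base field, so they hold over $k$; thus $P \times^G \sl_n$ is a homogeneous space for $\sl_n$ over $k$ with connected stabilizer. To upgrade $Z$ from an algebraic space to a (quasi-projective) $k$-scheme, I would note that it is a $k'/k$-form of the quasi-projective scheme $G^{\circ}_{k'} \backslash \sl_{n,k'}$ carrying an $\sl_n$-action, so the standard Galois descent for quasi-projective schemes applies.

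I expect the only delicate points to be bookkeeping: keeping the left/right conventions straight (the $G$-action on $P$ is on the right through $G \to H$, on $\sl_n$ on the left through the embedding, combined into the right action of \Cref{subsec:Galois-twist}, whereas homogeneity is for right translation on the $\sl_n$-factor), together with citing the correct representability and descent statements. The mathematical heart — the one-line orbit calculation giving $Z_{\bar k} \cong G^{\circ}\backslash\sl_n$ — is short.
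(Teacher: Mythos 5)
Your proposal is correct and follows essentially the same route as the paper's proof: pass to a splitting field where $P$ becomes the trivial $H$-torsor, identify $P\times^{G}\sl_n$ with $\sl_n/G^{\circ}$, and conclude since transitivity and connectedness of geometric stabilizers are insensitive to base field extension. The extra care you take with representability, the choice of a finite separable splitting field, and Galois descent of quasi-projectivity is sound but amounts to filling in details the paper leaves implicit by passing directly to $\bar{k}$.
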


\begin{proof}
    We can verify the statement by passing to the algebraic closure $\bar{k}$. Hence, we may assume that $P$ is the trivial $H$-torsor. Then $P\times^{G}\sl_{n} \cong \sl_{n}/G^{\circ}$ and the result follows. 
\end{proof}

\begin{theorem}\label{theorem:strong-app-twist}
     Let $\sigma \in H^1(k, H)$. Strong approximation holds for the twist $(BG^{\circ})^{\sigma}$ off $S$ with respect to $\Br((BG^{\circ})^{\sigma})$. 
\end{theorem}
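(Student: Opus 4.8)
The plan is to identify $(BG^{\circ})^{\sigma}$ with the quotient stack of a homogeneous space of $\sl_n$ and then to transport the known strong approximation results for homogeneous spaces, following the strategy used for the untwisted case in \cite[Theorem 5.5]{Dhillon:25}.

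First I would rewrite the twist. Let $P$ be a left $H$-torsor representing $\sigma$. By \Cref{p:inner} there is an isomorphism $(BG^{\circ})^{\sigma} \cong [P/G]$. Fix a faithful representation $G \hookrightarrow \sl_n$; since $G$ is closed in $\sl_n$ and $\sl_n/G$ is representable, the change-of-structure-group equivalence for quotient stacks (a routine consequence of the $2$-categorical machinery of \Cref{appendix}) gives $[P/G] \cong [(P \times^{G}\sl_n)/\sl_n]$. Set $Z \coloneqq P \times^{G}\sl_n$. By \Cref{l:homogeneous}, $Z$ is a homogeneous space for $\sl_n$ whose geometric stabilizers are connected; note that $Z$ need not have a $k$-point, as $Z(k)\ne\emptyset$ exactly when $\sigma$ lifts to $H^{1}(k,G)$.

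Next I would compare the stack $[Z/\sl_n]$ with the scheme $Z$. The projection $Z \to [Z/\sl_n]$ is an $\sl_n$-torsor, and $\sl_n$ is a special group, so $H^{1}$ of $\sl_n$ vanishes over every field and over every $\sO_v$. Hence this torsor induces surjections on $k$-points, on $k_v$-points, on $\sO_v$-points and therefore on adelic points, and $q\colon Z(\bA_{\cS,k}) \to [Z/\sl_n](\bA_{\cS,k})$ is the (open) quotient by the continuous action of $\sl_n(\bA_{\cS,k})$. Moreover, since $\sl_n$ is semisimple simply connected — so it has no nontrivial characters, $\Pic(\sl_n)=0$ and $\Br(\sl_n)=\Br(k)$ — pullback induces an isomorphism $\Br([Z/\sl_n]) \xrightarrow{\sim} \Br(Z)$ compatible with the Brauer--Manin pairings on adelic points; this is the comparison carried out for $Z=\sl_n/G^{\circ}$ in the proof of \cite[Theorem 5.5]{Dhillon:25}, and the argument goes through verbatim. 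It follows that $q$ restricts to a surjection $Z(\bA_{\cS,k})^{\Br(Z)} \twoheadrightarrow [Z/\sl_n](\bA_{\cS,k})^{\Br([Z/\sl_n])}$, and that $q$ sends the image of $Z(k)$ onto the image of $[Z/\sl_n](k)=(BG^{\circ})^{\sigma}(k)$.

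Finally, $\sl_n$ is split, hence isotropic at every place and in particular at the archimedean places of $S$, so the strong approximation theorem for homogeneous spaces of semisimple simply connected groups with connected geometric stabilizers — valid also when the homogeneous space has no rational point — applies to $Z$: the image of $Z(k)$ is dense in $Z(\bA_{\cS,k})^{\Br(Z)}$, cf.\ \cite{borovoi-demarche:13} (see also \cite{Demarche2010}). Pushing this forward along the open surjection $q$ and combining with the previous paragraph, the image of $(BG^{\circ})^{\sigma}(k)$ is dense in $(BG^{\circ})^{\sigma}(\bA_{\cS,k})^{\Br}$, which is the claim. The main obstacle is the middle paragraph: one must verify that the topology on the adelic points of the stack $[Z/\sl_n]$ — defined in Section~\ref{sec:ebm} via a lifting presentation — agrees with the quotient topology inherited from $Z$, and one must establish the Brauer group comparison $\Br([Z/\sl_n])\cong\Br(Z)$ together with the compatibility of the Brauer--Manin pairings; granting these, the rest is formal or cited.
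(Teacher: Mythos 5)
Your proposal follows essentially the same route as the paper's proof: identify $(BG^{\circ})^{\sigma}$ with $[P\times^{G}\sl_n/\sl_n]$ via \Cref{p:inner} and change of structure group, lift an adelic Brauer--Manin point along the $\sl_n$-torsor $P\times^{G}\sl_n \to (BG^{\circ})^{\sigma}$ (trivial over $\bA_{\cS,k}$ because $\sl_n$ is special), and conclude by \Cref{l:homogeneous} together with Borovoi--Demarche, pushing the resulting rational point back down by continuity. The only differences are cosmetic: the paper needs just the surjection $\Br((BG^{\circ})^{\sigma})\twoheadrightarrow \Br(P\times^{G}\sl_n)$ from \cite[Proposition 5.3]{Dhillon:25} rather than an isomorphism, and the topological compatibility you flag as an obstacle is already supplied by the definition of the adelic topology via $S$-liftable presentations, of which $P\times^{G}\sl_n \to (BG^{\circ})^{\sigma}$ is one.
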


\begin{proof} Note that we can always find a faithful representation $G \to \sl_n$.  Then by Proposition \ref{p:inner} and \cite[Lemma 2.1]{Dhillon:25}, 
    $(BG^\circ)^\sigma = [P \times^G \sl_n/\sl_n]$. Therefore, $P \times^G\sl_n \to (BG^\circ)^\sigma$ is an $\sl_n$-torsor.
    By \cite[Corollary 2.3]{Dhillon:25}, any $\sl_n$-torsor is trivial over $\spec(\bA_{\cS,k})$, so any adelic point $x \in (BG^\circ)^\sigma(\bA_{\cS,k})$ must lift to $\tilde{x} \in [P \times^G\sl_{n}/\sl_n](\bA_{\cS,k})$. Let $q$ denote the lifting map. 

    By \cite[Proposition 5.3]{Dhillon:25}, $\Br((BG^\circ)^{\sigma}) \twoheadrightarrow \Br(P\times^G \sl_n)$. Therefore, if $x \in (BG^\circ)^{\sigma}(\bA_{\cS,k})^{\Br}$, then $\tilde{x} \in  P\times^G\sl_n(\bA_{\cS,k})^{\Br}$. This is easily seen from the Brauer-Manin paring $\left<x, t\right> = \left<\tilde{x}, q^{\ast}(t)\right> = 0$.

    Let $U$ be an open neighborhood of $x$ in the adelic topology. Then $q^{-1}(U)$ is a open neighborhood of $\tilde{x}$ in the adelic toplogy. By Lemma \ref{l:homogeneous} and \cite[Theorem 0.1]{borovoi-demarche:13}, strong approximation holds for $P\times^G\sl_n$, so there exists an equivariant rational point $y \in q^{-1}(U)$. Hence, $q(y) \in U$ is a rational point. This proves the claim.
\end{proof}

\begin{theorem}
    Strong approximation for $BG$ off a finite set of places $S$ holds with respect to the \'etale Brauer-Manin obstruction.
\end{theorem}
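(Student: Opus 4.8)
The plan is to deduce the theorem by combining the torsor $BG^{\circ} \to BG$ from Proposition~\ref{p:basicExample} with the descent formalism of Lemma~\ref{lemma:descent} and the strong approximation result for the twists established in Theorem~\ref{theorem:strong-app-twist}. First I would observe that $H = G/G^{\circ}$ is a finite \'etale group scheme over $k$, so $f\colon BG^{\circ} \to BG$ is one of the torsors appearing in the intersection defining $BG(\bA_{\cS,k})^{\et,\Br}$. Consequently, for every adelic point $x \in BG(\bA_{\cS,k})^{\et,\Br}$ we have in particular $x \in BG(\bA_{\cS,k})^{f,\Br} = \coprod_{\sigma \in H^1(k,H)} f^{\sigma}\big((BG^{\circ})^{\sigma}(\bA_{\cS,k})^{\Br}\big)$, so there is a single $\sigma \in H^1(k,H)$ and a point $x_{\sigma} \in (BG^{\circ})^{\sigma}(\bA_{\cS,k})^{\Br}$ with $f^{\sigma}(x_{\sigma}) = x$.

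Next I would want to approximate $x$ by a $k$-point. Let $U \subseteq BG(\bA_{\cS,k})$ be an open neighborhood of $x$. By Remark~\ref{r:torsor} (applied via Proposition~\ref{p:inner}, which identifies $(BG^{\circ})^{\sigma}$ with $[P/G]$ and shows $f^{\sigma}$ is proper representable), the twisted map $f^{\sigma}\colon (BG^{\circ})^{\sigma} \to BG$ is topologically proper on adelic points; in particular it is continuous, so $(f^{\sigma})^{-1}(U)$ is an open neighborhood of $x_{\sigma}$ in $(BG^{\circ})^{\sigma}(\bA_{\cS,k})$. By Theorem~\ref{theorem:strong-app-twist}, strong approximation holds for $(BG^{\circ})^{\sigma}$ off $S$ with respect to $\Br((BG^{\circ})^{\sigma})$, and since $x_{\sigma}$ lies in the Brauer-Manin locus, there is a $k$-point $y \in (BG^{\circ})^{\sigma}(k)$ lying in $(f^{\sigma})^{-1}(U)$. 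Then $f^{\sigma}(y) \in BG(k)$ is a rational point lying in $U$, which shows that the image of $BG(k) \to BG(\bA_{\cS,k})^{\et,\Br}$ meets every open neighborhood of $x$; hence that image is dense.

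The one point requiring care is the compatibility of the Brauer-Manin condition with passage along $f^{\sigma}$: I must make sure that ``$x \in BG(\bA_{\cS,k})^{f,\Br}$'' genuinely supplies a lift $x_{\sigma}$ that lands in the \emph{Brauer-Manin} locus of the twist, not merely in $(BG^{\circ})^{\sigma}(\bA_{\cS,k})$ --- but this is exactly how the set $BG(\bA_{\cS,k})^{f,\Br}$ was defined in~\eqref{eq:f-locus}, so no extra work is needed there. I also need the mild point that taking the adelic points off $S$ is compatible with the definitions of Section~\ref{sec:ebm}, which were phrased over $\bA_k$; the colimit/restriction formalism of \cite[Section 2]{Dhillon:25} handles this, and the relevant finiteness (Proposition~\ref{p:finite}) guarantees only finitely many $\sigma$ contribute, so the disjoint union causes no trouble. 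The main obstacle is therefore not in this final argument at all but was already discharged in Theorem~\ref{theorem:strong-app-twist}, whose proof reduces the twist to a homogeneous space of $\sl_n$ with connected stabilizer and invokes \cite[Theorem 0.1]{borovoi-demarche:13}; here we simply assemble that with the étale-descent bookkeeping.
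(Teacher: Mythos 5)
Your proposal is correct and follows essentially the same route as the paper: both use the $H$-torsor $BG^{\circ}\to BG$ of Proposition~\ref{p:basicExample}, the decomposition of the (\'etale) Brauer--Manin locus over $\sigma\in H^1(k,H)$, and Theorem~\ref{theorem:strong-app-twist} for each twist $(BG^{\circ})^{\sigma}$. The only difference is presentational --- you argue pointwise by pulling back an open neighborhood along the continuous map $f^{\sigma}$, while the paper phrases the same argument via the decomposition $BG(k)=\coprod_{\sigma}(BG^{\circ})^{\sigma}(k)$ and the inclusion of $BG(\bA_{\cS,k})^{\et,\Br}$ in $\coprod_{\sigma}(BG^{\circ})^{\sigma}(\bA_{\cS,k})^{\Br}$.
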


\begin{proof}
    We must show that the image of $BG(k) \to BG(\bA_{\cS, k})^{\et,\Br}$ is dense.
    By Lemma~\ref{lemma:descent} and Proposition~\ref{p:basicExample}, 
    \[
        BG(k) = \coprod_{\sigma \in H^1(k,H)} (BG^{\circ})^\sigma(k)
    \] whence any $k$-point of $BG$ lifts to a $k$-point of $(BG^{\circ})^\sigma$ for some $\sigma \in H^1(k, H)$. 

    By Theorem~\ref{theorem:strong-app-twist}, the image of $(BG^{\circ})^\sigma(k) \to (BG^{\circ})^{\sigma}(\bA_{\cS,k})^{\Br}$ is dense. Therefore, the image of 
    \[
        BG(k) = \coprod_{\sigma \in H^1(k,H)} (BG^{\circ})^\sigma(k) \to \coprod_{\sigma\in H^1(k,H)} (BG^{\circ})^{\sigma}(\bA_{\cS,k})^{\Br},
    \] is dense. Moreover, by definition
    \[
        BG(\bA_{\cS, k})^{\et,\Br} \subseteq \coprod_{\sigma\in H^1(k,H)} (BG^{\circ})^{\sigma}(\bA_{\cS,k})^{\Br},
    \] so the image of $BG(k) \to BG(\bA_{\cS, k})^{\et,\Br}$ is dense.
\end{proof}

\begin{example}
    In the case where $G$ is a finite group, we have $G^\circ$ is trivial,
    and $H = G/G^\circ = G$. It follows that for 
    $\sigma \in H^1(k,H) = H^1(k,G)$, we have
    $(BG^\circ)^\sigma = \Spec k$. Thus, the Brauer-Manin obstruction
    of $(BG^\circ)^\sigma$ is trivial, and the closure of $BG(k)$ 
    in $BG(\bA_k)$ is:
    $$\coprod_{\sigma \in H^1(k,G)}(BG^\circ)^\sigma(\bA_{\cS,k})^{\Br} = H^1(k,G)= BG(k)$$
    That is, the $k$-points of $BG$ are closed in the adelic topology in this case.
\end{example}


\appendix
\section{Right group actions on stacks and groupoids}\label{appendix}

In this appendix, we will restate some definitions and results of \cite{ginot} in terms of right actions. Then we will state the necessarily details for group actions when construction Galois twists of an algebraic stack.

\begin{definition}\label{d:weakAction}    
    Let $\fX$ be a category fibered in groupoids. 
    A right action of $G$ on $\fX$ is a triple 
    $(\mu, \alpha, a)$ where $\mu: \fX\times G\to \fX$ is a morphism such that 
    the following diagrams are 2-commutive:
    $$
    \begin{tikzcd}
        {\fX\times G \times G} & {\fX\times G} && {\fX\times G} & \fX \\
        {\fX\times G} & \fX && \fX
        \arrow["{\id_\fX\times m}", from=1-1, to=1-2]
        \arrow["{\mu\times \id_G}"', from=1-1, to=2-1]
        \arrow["\mu", from=1-2, to=2-2]
        \arrow["\mu", from=1-4, to=1-5]
        \arrow["\alpha", Rightarrow, from=2-1, to=1-2]
        \arrow["\mu"', from=2-1, to=2-2]
        \arrow["{\id_\fX\times \{e\}}", from=2-4, to=1-4]
        \arrow[""{name=0, anchor=center, inner sep=0}, "{\id_\fX}"', from=2-4, to=1-5]
        \arrow["a"', Rightarrow, from=0, to=1-4]
    \end{tikzcd}
    $$
    where $e$ is the identity of $G$. 
    In what follows we will write $\mu(x,g)=x\cdot g$ where $x$ is an object (or a morphism) 
    of  $\fX$. Given $(x,g,h)\in \fX\times G\times G$ we write 
    $\alpha^{x}_{g,h}:x\cdot g \cdot h \to x \cdot gh$ and $a^x:x \to x\cdot e$
    for the value of the natural transformations $\alpha$ and $a$ respectively on 
    these objects. The above data is subject to the axioms:
    \begin{enumerate}
        \item $\alpha_{gh,k}^{x}\circ (\alpha_{g,h}^{x}\cdot k) = \alpha_{g,hk}^{x} \circ \alpha_{h,k}^{x\cdot g}$ as morphisms $x\cdot g \cdot h \cdot k \to x\cdot ghk$.
        \item $1_{x\cdot g}=(\alpha^{x}_{e,g})\circ (a^{x}\cdot g)$ as morphisms $x\cdot g \to x \cdot g$.
        \item $1_{x\cdot g}=(\alpha^{x}_{g,e})\circ (a^{x\cdot g})$ as morphisms $x\cdot g \to x \cdot g$. 
    \end{enumerate}
    A \emph{strict action of $G$} on $\fX$ is a weak action where $\alpha$ and $a$ are both the identity. 
\end{definition}

If $G$ is a group acting weakly on $\fX$ we denote the 
\emph{transformation groupoid} (\cite[\S 3]{ginot}) by $\lfloor\fX/G \rfloor$.  Recall that it is the category fibered in groupoids with 
$$
{\rm obj}\lfloor\fX/G \rfloor = {\rm obj}\fX
$$
and 
\begin{align*}
    \Hom_{\lfloor\fX/G \rfloor}(x,y) &= \{ (\gamma, g) \ | \  g\in G,\  \gamma\in \Hom_\fX(x,y\cdot g)\} \\ 
    &= \coprod_{g\in G} \Hom_{\fX}(x,y\cdot g). 
\end{align*} 
Given $(\gamma, g): x \to y$ and $(\delta, h): y \to z$, the composition is defined by: 
$$
(\delta , h) \circ (\gamma, g) = (\alpha_{h,g}^{z}\circ(\delta\cdot g)\circ \gamma, hg) 
$$

\begin{example}
    \label{e:actions}
    One way of generating actions is from an equivalence of categories fibered in 
    groupoids. Say $F:\fX\to \fY$ is such an equivalence, with quasi-inverse 
    given by $G:\fY \to \fX$. If $\fY$ has a strict action of a group then 
    $\fX$ acquires a weak action of the same group. The action on $\fX$ is given 
    by $x\cdot g := G(F(x)\cdot g)$. One checks that this is a weak action 
    by noting that $G\circ F \sim \id_\fX$, and so:
    $$x\cdot 1 = G(F(x)\cdot 1) = G\circ F(x) \cong x $$
    $$x\cdot g \cdot h = G(F\circ G(F(x)\cdot g)\cdot h) \cong G(F(x)\cdot g \cdot h) = G(F(x)\cdot gh) = x\cdot gh$$
    In this case, the action on $\fX$ will be strict if and only if the equivalence
    is in fact an isomorphism of categories. Also note that the action on $\fY$ need 
    not be strict in order to give $\fX$ a weak action in this way.
\end{example}

\begin{definition}
    \label{d:equivariant}
    Suppose that $\fX$ and $\fY$ have weak $G$-actions given by $(\mu_\fX,\alpha, a)$ 
    and $(\mu_\fY,\beta, b)$ respectively. An \emph{equivariant morphism $\fX\to \fY$} is given by a 
    pair $(F,\sigma)$ where $F:\fX \to \fY$ is a morphism of 
    stacks or groupoids (see \cite[1.3 (ii)]{romagny:05} or \cite[3.2]{ginot}) 
    and $\sigma$ is a 2-morphism such that the following diagram 2-commutes:
    $$
    \begin{tikzcd}
        {\fX\times G} & \fX \\
        {\fY\times G} & \fY
        \arrow["{\mu_\fX}", from=1-1, to=1-2]
        \arrow["{F\times\id_G}"', from=1-1, to=2-1]
        \arrow["F", from=1-2, to=2-2]
        \arrow["\sigma", Rightarrow, from=2-1, to=1-2]
        \arrow["{\mu_\fY}"', from=2-1, to=2-2]
    \end{tikzcd}
    $$
    In this setting, when given $(x,g) \in \fX \times G$, we write 
    $\sigma_g^x:F(x)\cdot g \to F(x\cdot g)$ for the value of the natural 
    transformation $\sigma$ on this object. With this notation, we also require:
    \begin{enumerate}
        \item $F(\alpha^{x}_{g,h}) \circ \sigma^{x\cdot g}_{h}\circ (\sigma_{g}^{x}\cdot h) = \sigma^{x}_{gh}\circ\beta^{F(x)}_{g,h} $ 
        as morphisms $F(x)\cdot g \cdot h \to F(x\cdot gh)$
        \item $(\sigma_{e}^{x})^{-1}\circ F(a^{x}) = b^{F(x)}$ as morphisms $F(x) \to F(x)\cdot e$ 
    \end{enumerate}
\end{definition}

Given a $G$-equivariant morphism $(F,\sigma):\fX\to\fY$, there is an induced morphism 
$\lfloor F\rfloor :\lfloor \fX/G\rfloor \to \lfloor \fY/G\rfloor $ defined on morphisms
$(\gamma,g):x \to y$ by: 
$$
\lfloor F \rfloor (\gamma,g) =  ((\sigma_{g}^{y})^{-1}\circ F(\gamma), g) \in \Hom_{\lfloor \fY/G\rfloor}(F(x),F(y))
$$
see \cite[page 14]{ginot}.

\begin{definition}\label{d:equivariant2}
    Suppose $\fX$ and $\fY$ have weak $G$-actions given by $(\mu_\fX,\alpha, a)$ 
    and $(\mu_\fY,\beta, b)$ respectively. Given two $G$-equivariant morphisms 
    $(F,\sigma)$ and $(H,\tau): \fX \to \fY$ a \emph{$G$-equivariant 2-morphism }
    $\Lambda : (F,\sigma)\Rightarrow (H,\tau)$ is a 2-morphism such that 
    $$
    \tau^{x}_{g}\circ(\Lambda^{x}\cdot g) = \Lambda^{x\cdot g}\circ \sigma^{x}_{g} \quad \text{ as morphisms } F(x)\cdot g \to H(x\cdot g)  
    $$
\end{definition}

\begin{remark}
    Suppose that $\phi:G_1 \to G_2$ is a group homomorphism, and $\fX$ and $\fY$
    have weak $G_1$ and $G_2$ actions respectively. Then $\phi$ induces 
    a weak $G_1$ action on $\fY$. In this case, it is reasonable to talk about 
    an $\phi$-equivariant morphism $(F,\sigma):\fX \to \fY$. Such a morphism is subject to the 
    same conditions as in \Cref{d:equivariant}, but with $\id_G: G \to G$ in the 
    diagram replaced with $\phi:G_1 \to G_2$. Moreover, given two such 
    $\phi$-equivariant morphisms $(F,\sigma)$ and $(H,\tau)$, it is 
    reasonable to ask for a $\phi$-equivariant 2-morphism $\Lambda:(F,\sigma) \Rightarrow (H,\tau)$.
    In this context we say that $(F,\sigma)$, $(H,\tau)$, and $\Lambda$ are 
    $\phi$-equivariant.
\end{remark}

\begin{proposition}
    \label{p:natExists}
    In the setting of the previous definition, $\Lambda$ induces a 2-morphism:
    $$
    \lfloor \Lambda\rfloor : \lfloor F\rfloor \Rightarrow \lfloor H\rfloor
    $$
    defined by:
    $$
    \lfloor \Lambda \rfloor^{x} = (b^{H(x)}\circ\Lambda^{x},e). 
    $$  
\end{proposition}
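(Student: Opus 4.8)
The plan is to verify directly that the assignment $x \mapsto \lfloor \Lambda \rfloor^x := (b^{H(x)} \circ \Lambda^x, e)$ defines a natural transformation $\lfloor F \rfloor \Rightarrow \lfloor H \rfloor$ of functors $\lfloor \fX/G \rfloor \to \lfloor \fY/G \rfloor$. First I would check that each $\lfloor \Lambda \rfloor^x$ is in fact a morphism in $\lfloor \fY/G \rfloor$ from $\lfloor F \rfloor(x) = F(x)$ to $\lfloor H \rfloor(x) = H(x)$: since $\Lambda^x \colon F(x) \to H(x)$ in $\fY$ and $b^{H(x)} \colon H(x) \to H(x) \cdot e$ in $\fY$, the composite $b^{H(x)} \circ \Lambda^x$ lies in $\Hom_\fY(F(x), H(x) \cdot e)$, so the pair $(b^{H(x)} \circ \Lambda^x, e)$ is indeed an element of $\coprod_{g \in G} \Hom_\fY(F(x), H(x) \cdot g) = \Hom_{\lfloor \fY/G \rfloor}(F(x), H(x))$, using the $e$-component.

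The substance is the naturality square: for every morphism $(\gamma, g) \colon x \to y$ in $\lfloor \fX/G \rfloor$ I must show
$$
\lfloor H \rfloor(\gamma, g) \circ \lfloor \Lambda \rfloor^{x} = \lfloor \Lambda \rfloor^{y} \circ \lfloor F \rfloor(\gamma, g)
$$
as morphisms $F(x) \to H(y)$ in $\lfloor \fY/G \rfloor$. I would expand both sides using the formula for composition in a transformation groupoid, namely $(\delta, h) \circ (\gamma', g') = (\beta^{z}_{h,g'} \circ (\delta \cdot g') \circ \gamma', hg')$, together with the explicit descriptions $\lfloor F \rfloor(\gamma, g) = ((\sigma^{y}_{g})^{-1} \circ F(\gamma), g)$ and $\lfloor H \rfloor(\gamma, g) = ((\tau^{y}_{g})^{-1} \circ H(\gamma), g)$. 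On the left this yields a pair whose $G$-component is $g \cdot e = g$ and whose $\fY$-component is $\beta^{H(y)}_{g,e} \circ (((\tau^{y}_{g})^{-1} \circ H(\gamma)) \cdot e) \circ b^{H(x)} \circ \Lambda^x$; on the right the $G$-component is $e \cdot g = g$ and the $\fY$-component is $\beta^{H(y)}_{e,g} \circ ((b^{H(y)} \circ \Lambda^y) \cdot g) \circ (\sigma^{y}_{g})^{-1} \circ F(\gamma)$. So the $G$-components already agree, and it remains to match the $\fY$-components.

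To do this I would feed in the three structural identities available: the unit axioms (2) and (3) of \Cref{d:weakAction} for the action $(\mu_\fY, \beta, b)$ on $\fY$ (which let me rewrite $\beta_{g,e}$ and $\beta_{e,g}$ terms against $b$), the compatibility condition (2) of \Cref{d:equivariant} relating $\sigma^{x}_{e}$, $F(a^x)$ and $b^{F(x)}$ (and likewise $\tau$, $H$), and crucially the defining equation of a $G$-equivariant 2-morphism from \Cref{d:equivariant2}, $\tau^{x}_{g} \circ (\Lambda^{x} \cdot g) = \Lambda^{x \cdot g} \circ \sigma^{x}_{g}$, together with functoriality of the action on morphisms (so that $H(\gamma) \cdot e$ and $\Lambda^x \cdot e$ make sense and $b$ is natural in its argument, i.e. $(H(\gamma) \cdot e) \circ b^{H(x)} = b^{H(y)} \circ H(\gamma)$, which itself follows from $b$ being a natural transformation $\id_\fY \Rightarrow \mu_\fY \circ (\id_\fY \times \{e\})$). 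The expected main obstacle is purely bookkeeping: there is no conceptual difficulty, but one must be scrupulous about the order of composition, about where the $2$-cells $\beta$, $b$, $\sigma$, $\tau$ act, and about the distinction between $g \cdot e$ and $e \cdot g$ versus $g$ in the groupoid composition — a single misplaced associator or inverse will break the cancellation. I would organize the computation as a short diagram chase, reducing both sides to the common expression obtained by applying \Cref{d:equivariant2} at the pair $(y, g)$ and then using naturality of $b$ to slide it past $H(\gamma)$.
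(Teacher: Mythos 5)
Your proposal is correct and follows essentially the same route as the paper: unpack the composition in the transformation groupoid, reduce the naturality square for $\lfloor \Lambda\rfloor$ to an identity of $\fY$-components, and close it with the unit axioms of the weak action together with the equivariance identity of \Cref{d:equivariant2}. The only ingredient you leave unstated is the naturality of $\Lambda$ itself, needed to replace $H(\gamma)\circ\Lambda^{x}$ by $\Lambda^{y\cdot g}\circ F(\gamma)$ before invoking \Cref{d:equivariant2} (the paper's ``top left square''); with that made explicit, your chase --- which uses naturality of $b$ where the paper instead uses naturality of $\tau$ together with axiom (2) of \Cref{d:equivariant} --- completes just as the paper's does.
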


\begin{proof}
    This amounts to checking that for every $(\gamma,g):x \to y$ in $\lfloor \fX\rfloor$
    the following diagram commutes in $\lfloor \fY/G\rfloor$: 
    $$
    \begin{tikzcd}
        {\lfloor F\rfloor(x)} & {\lfloor H\rfloor(x)} \\
        {\lfloor F\rfloor(y)} & {\lfloor H\rfloor(y)}
        \arrow["{\lfloor \Lambda\rfloor^{x}}", from=1-1, to=1-2]
        \arrow["{\lfloor F\rfloor(\gamma,g)}"', from=1-1, to=2-1]
        \arrow["{\lfloor H\rfloor(\gamma,g)}", from=1-2, to=2-2]
        \arrow["{\lfloor \Lambda\rfloor^{y}}"', from=2-1, to=2-2]
    \end{tikzcd}
    $$
    If one unpacks the definitions, they see that this is asking for the 
    outermost square of the following diagram to commute in $\fY$:
    $$
    \begin{tikzcd}[column sep=huge]
        {F(x)} & {H(x)} & {H(x)\cdot e} \\
        {F(y\cdot g)} & {H(y\cdot g)} & {H(y)\cdot g \cdot e} \\
        {F(y)\cdot g} & {H(y)\cdot e \cdot g} & {H(y)\cdot g}
        \arrow["{\Lambda^x}", from=1-1, to=1-2]
        \arrow["{F(\gamma)}"', from=1-1, to=2-1]
        \arrow["{b^{H(x)}}", from=1-2, to=1-3]
        \arrow["{H(\gamma)}"', from=1-2, to=2-2]
        \arrow["{((\tau_{g}^{y})^{-1}\circ H(\gamma))\cdot e}", from=1-3, to=2-3]
        \arrow["{\Lambda^{y\cdot g}}"', from=2-1, to=2-2]
        \arrow["{(\sigma_{g}^y)^{-1}}"', from=2-1, to=3-1]
        \arrow["{(\tau_{g}^{y})^{-1}}"{description}, from=2-2, to=3-3]
        \arrow["{\beta_{g,e}^{H(y)}}", from=2-3, to=3-3]
        \arrow["{(b^{H(y)}\circ \Lambda^y)\cdot g}"', from=3-1, to=3-2]
        \arrow["{\beta_{e,g}^{H(y)}}"', from=3-2, to=3-3]
    \end{tikzcd}
    $$
    The top left square commutes by the naturality of $\Lambda$. 
    To show that the bottom left corner commutes, one recalls by (2) in \Cref{d:weakAction} 
    that $1_{H(y)\cdot g} = (\beta_{e,g}^{H(y)})\circ (b^{H(y)}\cdot g)$, so we have:
    \begin{align*}
        \beta_{e,g}^{H(y)}\circ(b^{H(y)}\circ\Lambda^y)\cdot g \circ (\sigma_{g}^{y})^{-1}
        &= \beta_{e,g}^{H(y)}\circ (b^{H(y)}\cdot g)\circ(\Lambda^y\cdot g) \circ (\sigma_{g}^{y})^{-1}\\
        &= 1_{H(y)\cdot g} \circ (\Lambda^y\cdot g) \circ (\sigma_{g}^{y})^{-1}\\
        &= (\tau_{g}^{y})^{-1}\circ \Lambda^{y\cdot g}
    \end{align*}  where the last equality follows from \Cref{d:equivariant2}.  
    
    The commutativity of the 
    top right corner of the diagram depends only on $(H,\tau)$ and has nothing to do 
    with $(F,\sigma)$ or $\Lambda$. The commutativity can be shown using the naturality
    of $\tau$, (2) in \Cref{d:equivariant} and (3) in \Cref{d:weakAction}.    
\end{proof}

\begin{example}\label{e:inner} This example is well-known. We present it here in terms of the sheafification of transformation groupoids. 
    Let $G$ be a linear algebraic group over a field $k$ and $P$ a left $G$-torsor over ~$k$. 
    Then there is a right action of $G$ on $G \times P$ given by:
    $$
    (g,p) \cdot h := (g^h,h^{-1}p) \quad \text{ where } g^{h}=h^{-1}gh 
    $$
    The space obtained by taking the quotient of this action is called the \emph{inner form}
    associated to $P$, and is denoted by $G^{P}$. It is in fact a group. To construct the 
    multiplication map consider the morphism 
    $$
    G\times P \times G\times P \to G\times P
    $$
    which, for a $k$-scheme $S$, is given on $S$-points by:
    $$
    (g_{1},p,g_{2}, p') \mapsto (g_{1} g_{3}^{-1}g_{2}g_3,p)
    $$ where $g_3$ is the unique $S$-point of $G$ such that $p' = g_3\cdot p$. 

    There is an action of $G \times G$ on $G\times P\times G\times P$ and an action of 
    $G$ on $G\times P$. One then checks that this morphism is $\pi_{2}$-equivariant, where 
    $\pi_2:G\times G\to G$ is the projection onto the second factor. 

    This gives a morphism of transformation groupoids 
    $$
    \trans{G\times P\times G \times P/G^2}\to \trans{G\times P/G}
    $$
    and hence a morphism $G^{P}\times G^{P}\to G^{P}$ after sheafifying. 
    Similarly, to construct inversion and unit maps, one considers the morphisms 
    $\iota:G \times P \to G \times P$ given by $(g,p) \mapsto (g^{-1},p)$, and 
    $\eta:P \to G \times P$ given by $p \mapsto (e,p)$ respectively. Since these
    morphisms are $G$-equivariant when $P$ is given a right $G$-action by 
    inverting its left $G$-action, it follows that these morphisms also give 
    morphisms of transformation groupoids. After sheafifying, one gets 
    morphisms $G^P \to G^P$ and $* \to G^P$. One then readily checks that 
    $G^{P}$ becomes a group under these morphisms.
\end{example}

\begin{example}\label{e:twist}
    Let $G$ be a linear algebraic group over a field $k$, let $m:G \times G \to G$
    denote multiplication. Let $\fX$  be a stack with a weak right action of $G$ given by 
    $(\mu_1,\alpha, a)$ and let $P$ be a left $G$-torsor with action denoted 
    by $\mu_2$. There is a morphism $\mu:\fX \times P \times G \to \fX \times P$
    given by:
    $$ (x,p,g) \mapsto (x\cdot g,g^{-1}p) $$
    that induces a weak right action of $G$ on $\fX \times P$. The weak action is 
    determined by the natural isomorphisms 
    $\zeta:\mu \circ (\mu \times \id_G) \Rightarrow \mu \circ (\id_{\fX\times P} \times m)$
    given by:
    $$\zeta_{g,h}^{x,p} = \alpha_{g,h}^x \times \id_{h^{-1}g^{-1}p}: (x\cdot g\cdot h, h^{-1}g^{-1}p) \to (x\cdot gh, h^{-1}g^{-1}p)$$
    and $z:\id_{\fX\times P} \Rightarrow \mu\circ(\id_{\fX}\times e)$ 
    given by:
    $$z^{x,p} = a^x \times \id_{p}: (x,p) \to (x \cdot e, p) $$
    We will denote the quotient stack obtained by this action by $\fX^{P}$. 
\end{example}

\begin{proposition}\label{p:innerAction}
    In the situation of the above examples,
    the morphism 
    $$
    \rho :  \fX\times P \times  G\times P\to \fX\times P
    $$
    given by 
    $$
    (x,p,g,p')\mapsto (x\cdot (h^{-1}gh),p), 
    $$ where $h$ is the unique point of $G$ that satisfies $h\cdot p = p'$,
    induces a weak group action 
    $$
    \bar{\rho}: \fX^{P}\times G^{P}\to \fX^{P}.
    $$
\end{proposition}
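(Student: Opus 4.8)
The plan is to follow exactly the pattern of Example \ref{e:inner}, but with the group $G$ replaced by the stack $\fX$ acted on in Example \ref{e:twist}, lifting everything through the transformation-groupoid formalism and then sheafifying. First I would record the relevant actions: the group $G^2 = G\times G$ acts on $\fX\times P\times G\times P$ (on the first two factors via $\mu$ from Example \ref{e:twist}, on the last two factors via the inner-form action of Example \ref{e:inner}), and $G$ acts on $\fX\times P$ via $\mu$. The first task is to check that $\rho$ is $\pi_2$-equivariant for these actions, where $\pi_2\colon G^2\to G$ is the second projection. Unwinding, this means $\rho\bigl((x,p,g,p')\cdot(h_1,h_2)\bigr)$ and $\rho(x,p,g,p')\cdot h_2$ agree up to the coherence data; concretely, if $h$ is the unique point with $h\cdot p = p'$, then after acting by $(h_1,h_2)$ the new pair $(g^{h_2}, h_2^{-1}p')$ over $(x\cdot h_1, h_1^{-1}p)$ has conjugating element $h_2^{-1}h h_1$ (since $(h_2^{-1}hh_1)\cdot(h_1^{-1}p) = h_2^{-1}h\cdot p = h_2^{-1}p'$), so the output element is $(h_2^{-1}hh_1)^{-1}g^{h_2}(h_2^{-1}hh_1) = h_1^{-1}(h^{-1}gh)h_1$, which is precisely the $G$-action by $h_2$ — wait, by $h_1$ — applied to $x\cdot(h^{-1}gh)$. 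I would present this computation carefully, keeping track of the $2$-morphisms $\alpha$, $a$ (which enter because the action on $\fX$ is only weak, so the equivariance of $\rho$ is an equivariance in the sense of \Cref{d:equivariant}, carrying a $2$-morphism $\sigma$).

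Second, having a $\pi_2$-equivariant morphism $(\rho,\sigma)$, I invoke the functoriality of transformation groupoids recorded in the appendix: an equivariant morphism induces a morphism of transformation groupoids $\trans{\fX\times P\times G\times P/G^2}\to\trans{\fX\times P/G}$ (this is the construction $\lfloor F\rfloor$ described after \Cref{d:equivariant}, in its $\phi$-equivariant version with $\phi = \pi_2$). Now $\trans{\fX\times P\times G\times P/G^2}$ is, after sheafification, $\fX^P\times G^P$ — this uses that sheafification commutes with products and that $\fX^P = [\fX\times P/G]$, $G^P = [G\times P/G]$ by definition — and $\trans{\fX\times P/G}$ sheafifies to $\fX^P$. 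So sheafifying the groupoid morphism gives $\bar\rho\colon\fX^P\times G^P\to\fX^P$.

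Third, I must verify that $\bar\rho$ is a \emph{weak action}, i.e. produce the coherence $2$-morphisms $\bar\alpha$, $\bar a$ of \Cref{d:weakAction} and check axioms (1)--(3). These descend from the corresponding structure upstairs: the associativity square for $\bar\rho$ amounts to comparing two routes $\fX^P\times G^P\times G^P\to\fX^P$, which I would obtain by the same sheafify-an-equivariant-morphism recipe applied to the triple product $\fX\times P\times G\times P\times G\times P$, using the associativity/coherence already built into Example \ref{e:twist} for $\fX^P$ together with the group structure on $G^P$ from Example \ref{e:inner}; the needed $2$-morphism is essentially $\zeta$ from Example \ref{e:twist} transported along $\rho$. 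The unit axiom follows from $z$ in Example \ref{e:twist} and the unit $*\to G^P$ of Example \ref{e:inner}. Each such descent is an application of \Cref{p:natExists}: a $\phi$-equivariant $2$-morphism upstairs induces a $2$-morphism of the induced groupoid morphisms, hence after sheafifying a $2$-morphism of the maps on quotients.

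The main obstacle is the bookkeeping in the third step: because the $G$-action on $\fX$ is only weak, none of the diagrams strictly commute, and one has to carry the natural transformations $\alpha, a$ (and their images under $\rho$) through every compatibility check, verifying that the induced $\bar\alpha,\bar a$ satisfy the pentagon/triangle identities (1)--(3) of \Cref{d:weakAction}. This is conceptually routine — every identity needed is a consequence of the axioms of \Cref{d:weakAction}, \Cref{d:equivariant}, \Cref{d:equivariant2} for the data already constructed in Examples \ref{e:inner} and \ref{e:twist}, combined via \Cref{p:natExists} — but it is calculation-heavy, so I would either relegate the diagram chases to the reader after setting up the framework, or present one representative identity (the associativity pentagon) in detail and assert the rest follow analogously.
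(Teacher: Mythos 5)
Your proposal follows essentially the same route as the paper: check that $\rho$ is equivariant for the projection $G\times G\to G$ onto the factor acting on $\fX\times P$ (your computation correctly identifies this as the action by $h_1$, i.e.\ the \emph{first} projection under your stated convention, despite your twice labelling it $\pi_2$), descend to transformation groupoids and sheafify to obtain $\bar\rho$, and then build the associativity and unit constraints upstairs out of $\alpha$ and $a$ and descend them via \Cref{p:natExists} after verifying their equivariance. This is exactly the paper's proof, which writes down $\sigma$, $\lambda$ and $l$ explicitly and reduces the remaining diagram chases to axioms (1)--(3) of \Cref{d:weakAction}.
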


\begin{proof}
    There is a $G\times G$ action on $\fX\times P \times  G\times P$ given by combining the actions of the previous two examples. Explicitly
    $$
    (x,p, g, hp)\cdot (s_{1},s_{2}) = (x\cdot s_{1}, s_{1}^{-1}p,g^{s_{2}}, s_{2}^{-1}hp). 
    $$
    We need to show that $\rho$ is $\pi_1$-equivariant where $\pi_1:G\times G\to G$ is the 
    projection onto the first factor. That is , if we let $(\mu_{1},\alpha,a)$ 
    (resp. $\mu_{2}$) be the action of $G$ on $\fX$ (resp. $P$) then we need to give a 
    2-morphism $\sigma$ such that the following diagram 2-commutes:
    $$
    \begin{tikzcd}
        {\fX\times P\times G \times P \times G \times G} & {\fX\times P \times G \times P} \\
        {\fX\times P \times G} & {\fX\times P}
        \arrow[from=1-1, to=1-2]
        \arrow["{\rho\times\pi_1}"', from=1-1, to=2-1]
        \arrow["\rho", from=1-2, to=2-2]
        \arrow["\sigma", Rightarrow, from=2-1, to=1-2]
        \arrow["\mu"', from=2-1, to=2-2]
    \end{tikzcd}
    $$
    Given $(x,p,g,hp,s_1,s_2) \in \fX\times P \times G \times P \times G \times G$,
    then by going along the bottom path of the above diagram, we get 
    $(x\cdot g^{h} \cdot s_{1},s_{1}^{-1}p)$. Going along the top we get
    $(x\cdot s_{1}\cdot g^{hs_1}, s_{1}^{-1}p)$. Since the $P$ component of
    both of these pairs are the same, to give an isomorphism between them it suffices 
    to give an isomorphism 
    $$x\cdot g^{h}\cdot s_{1} \to x\cdot s_{1}\cdot g^{hs_1}$$
    in $\fX$. One then checks that if 
    $\omega_{s}^{x,g} = (\alpha_{s,g^{s}}^{x})^{-1} \circ \alpha_{g,s}^{x}$
    then $\omega_{s_{1}}^{x,g^{h}}$ is the desired isomorphism. Thus 
    $$\sigma_{(s_1,s_2)}^{(x,p,g,hp)} = \omega_{s_{1}}^{x,g^{h}} \times \id_{s_{1}^{-1}p}$$
    
    This proves the existence of the morphism $\bar{\rho}$. It remains to check that 
    $\bar{\rho}$ can be extended to an action $(\bar{\rho}, \bar{\lambda}, \bar{l})$. 
    To do this, we will construct the appropriate natural isomorphisms on 
    $\fX \times P \times G\times P$ and check that they are equivariant. Let $m$
    denote the morphism $G \times P \times G \times P \to G \times P$
    constructed in \Cref{e:inner} which descends to multiplication 
    $G^{P} \times G^{P} \to G^{P}$.

    To construct $\lambda$ is to give a 2-morphism such that the following 
    diagram 2-commutes:
    $$
    \begin{tikzcd}
        {\fX\times P\times G \times P \times G \times P} & {\fX\times P \times G \times P} \\
        {\fX\times P \times G \times P} & {\fX\times P}
        \arrow["{\id_{\fX\times P}\times m}", from=1-1, to=1-2]
        \arrow["{\rho\times\id_{G\times P}}"', from=1-1, to=2-1]
        \arrow["\rho", from=1-2, to=2-2]
        \arrow["\lambda", Rightarrow, from=2-1, to=1-2]
        \arrow["\rho"', from=2-1, to=2-2]
    \end{tikzcd}
    $$
    Given 
    $(x,p,g_{1},h_{1}p,g_{2},h_{2}p) \in \fX\times P\times G \times P \times G \times P$,
    then going along the top path of the above diagram we get
    $(x\cdot g_{1}^{h_{1}}g_{2}^{h_{2}},p)$. Going along the bottom 
    path we get $(x \cdot g_{1}^{h_{1}}\cdot g_{2}^{h_{2}}, p)$.
    Similar to defining $\sigma$, to give an isomorphism from the latter to the former
    it suffices to give an isomorphism
    $$x \cdot g_{1}^{h_{1}}\cdot g_{2}^{h_{2}} \to x\cdot g_{1}^{h_{1}}g_{2}^{h_{2}} $$
    in $\fX$. By definition, 
    $\alpha_{g_{1}^{h_{1}}, g_{2}^{h_{2}}}^{x}$
    is the desired isomorphism. So 
    $$\lambda^{(x,p,g_1,h_1p,g_2,h_2p)}= \alpha_{g_{1}^{h_{1}},g_{2}^{h_{2}}}^{x} \times \id_{p}$$

    To show that $\lambda$ descends as desired requires us to check that it is 
    $\pi_1$-equivariant, where $\pi_1:G^{3} \to G$ is the first projection,
    and $\fX\times P \times G \times P \times G \times P$ is given an action by 
    $G^{3}$ action by combining the actions of $G$ on $\fX \times P$ and 
    $G \times P$. If one unpacks all of the definitions, constructions and computations, 
    this reduces to checking that for all 
    $(x,p,g_{1},h_{1}p,g_{2},h_{2}p,s) \in \fX\times P \times G \times P \times G \times P \times G$
    the following diagram commutes in $\fX$:
    $$
    \begin{tikzcd}[column sep=huge]
        {x\cdot g_{1}^{h_{1}}g_{2}^{h_{2}}\cdot s} & {x\cdot s\cdot g_{1}^{h_{1}s}g_{2}^{h_{2}s}} \\
        {x\cdot g_{1}^{h_{1}}\cdot g_{2}^{h_{2}}\cdot s} & {x\cdot s\cdot g_{1}^{h_{1}s}\cdot g_{2}^{h_{2}s}}
        \arrow["{\omega_{s}^{x,g_{1}^{h_{1}}g_{2}^{h_{2}}}}", from=1-1, to=1-2]
        \arrow["{(\alpha_{g_{1}^{h_{1}},g_{2}^{h_{2}}}^{x})\cdot s}", from=2-1, to=1-1]
        \arrow["{(\omega_{s}^{x,g_{1}^{h_{1}}}\cdot g_{2}^{h_{2}s})\circ\omega_{s}^{x\cdot g_{1}^{h_{1}},g_{2}^{h_{2}}}}"', from=2-1, to=2-2]
        \arrow["{\alpha_{g_{1}^{h_{1}s},g_{2}^{h_{2}s}}^{x\cdot s}}"', from=2-2, to=1-2]
    \end{tikzcd}
    $$
    Recall $\omega_{s}^{x,g} = (\alpha_{s,g^{s}}^{x})^{-1} \circ \alpha_{g,s}^{x}$.
    The commutativity of this diagram can then be shown by repeated uses of (1) in 
    \Cref{d:weakAction}.

    To construct $l$ is to give a 2-morphism such that the following diagram 2-commutes:
    $$
    \begin{tikzcd}
        {\fX\times P \times G \times P} & {\fX \times P} \\
        {\fX \times P}
        \arrow["\rho", from=1-1, to=1-2]
        \arrow["\id_{\fX \times P} \times \eta", from=2-1, to=1-1]
        \arrow[""{name=0, anchor=center, inner sep=0}, "{\id_{\fX\times P}}"', from=2-1, to=1-2]
        \arrow["l"{description}, Rightarrow, from=0, to=1-1]
    \end{tikzcd} 
    $$ Here $\eta$ is the unit map as defined in \Cref{e:inner}.
    Given $(x,p) \in \fX \times P$, then going along the top path we get $(x\cdot e,p)$ and 
    going along the bottom path we get $(x,p)$. So we use:
    $l^{(x,p)} = a^{x} \times \id_p$ to make the diagram 2-commute.
    To show that $l$ descends as desired requires us to check that it is $G$-equivariant.
    Similar to checking that $\lambda$ descends, after unpacking, this problem reduces to checking
    that for all $(x,p,g) \in \fX \times P \times G$ the following diagram commutes in $\fX$:
    $$
    \begin{tikzcd}
        {x\cdot e \cdot g} & {x\cdot g \cdot e} \\
        {x\cdot g} & {x\cdot g}
        \arrow["{\omega_{g}^{x,e}}", from=1-1, to=1-2]
        \arrow["{a^{x}\cdot g}", from=2-1, to=1-1]
        \arrow["{\id_{x\cdot g}}"', from=2-1, to=2-2]
        \arrow["{a^{x\cdot g}}"', from=2-2, to=1-2]
    \end{tikzcd}
    $$
    Since $\omega_{g}^{x,e} = (\alpha_{g,e}^{x})^{-1}\circ\alpha_{e,g}^{x}$
    this follows immediately from (2) and (3) in \Cref{d:weakAction}.

    Thus, both $\lambda$ and $l$ descend as desired. This produces the data required to 
    define a weak group action of $G^P$ on the transformation groupoid $\lfloor \fX/G\rfloor$. 
    One readily checks that it satisfies the axioms of \ref{d:weakAction}. The 
    action on quotient stacks is then obtained by stackifying.
\end{proof}

\newpage

\printbibliography

@article{borovoi-demarche:13,
  author = {Mikhail Borovoi and Cyril Demarche},
  journal = { Comment. Math. Helv. },
  number = {1},
  title = {Manin obstruction to strong approximation for homogeneous spaces. },
  volume = {88},
  year = {2013},
  pages = {1--54}
}

@misc{Dhillon:25,
      title={Approximation theorems for classifying stacks over number fields}, 
      author={Ajneet Dhillon},
      year={2025},
      eprint={2507.13900},
      archivePrefix={arXiv},
      primaryClass={math.NT},
}

@article{Demarche2010,
    author = {Demarche, Cyril},
    title = {Le défaut d’approximation forte dans les groupes linéaires connexes},
    journal = {Proceedings of the London Mathematical Society},
    volume = {102},
    number = {3},
    pages = {563-597},
    year = {2010},
    month = {10},
    issn = {0024-6115},
    doi = {10.1112/plms/pdq033}
}

@article{Ellenberg--Satriano--Zureick-Brown2023, 
  title={Heights on stacks and a generalized Batyrev–Manin–Malle conjecture}, 
  volume={11}, 
  DOI={10.1017/fms.2023.5}, 
  journal={Forum of Mathematics, Sigma}, 
  author={Ellenberg, Jordan S. and Satriano, Matthew and Zureick-Brown, David}, 
  year={2023}, 
  pages={e14}
  }

@article{Harari2008,
  author = {David Harari},
  journal = {Algebra \& Number Theory},
  number = {5},
  title = {Le défaut d'approximation forte pour les groupes algébriques commutatifs},
  volume = {2},
  pages = {595--611},
  year = {2008}
}

@misc{loughran-santens25,
      title={Malle's conjecture and Brauer groups of stacks}, 
      author={Daniel Loughran and Tim Santens},
      year={2025},
      eprint={2412.04196},
      archivePrefix={arXiv},
      primaryClass={math.NT}
}

@book{poonen:17,
    author = {Bjorn Poonen},
    title = {Rational points on varieties},
    publisher = {American Mathematical Society},
    year = {2017},
    series = {{Graduate Studies in Mathematics}},
    volume = {186}
}

@article{romagny:05,
    AUTHOR = {Romagny, Matthieu},
     TITLE = {Group actions on stacks and applications},
   JOURNAL = {Michigan Math. J.},
  FJOURNAL = {Michigan Mathematical Journal},
    VOLUME = {53},
      YEAR = {2005},
    NUMBER = {1},
     PAGES = {209--236}
}

@article {conrad2020,
    AUTHOR = {Conrad, Brian},
     TITLE = {Weil and {G}rothendieck approaches to adelic points},
   JOURNAL = {Enseign. Math. (2)},
  FJOURNAL = {L'Enseignement Math\'ematique. Revue Internationale. 2e
              S\'erie},
    VOLUME = {58},
      YEAR = {2012},
    NUMBER = {1-2},
     PAGES = {61--97}
}

@article {harari02,
    AUTHOR = {Harari, David and Skorobogatov, Alexei N.},
     TITLE = {Non-abelian cohomology and rational points},
   JOURNAL = {Compositio Math.},
  FJOURNAL = {Compositio Mathematica},
    VOLUME = {130},
      YEAR = {2002},
    NUMBER = {3},
     PAGES = {241--273},
}

@book {lang,
    AUTHOR = {Lang, Serge},
     TITLE = {Algebraic number theory},
    SERIES = {Graduate Texts in Mathematics},
    VOLUME = {110},
   EDITION = {Second},
 PUBLISHER = {Springer-Verlag, New York},
      YEAR = {1994},
     PAGES = {xiv+357}
}

@misc{stacks-project,
    author       = {The {Stacks Project Authors}},
    title        = {\textit{Stacks Project}},
    howpublished = {\url{https://stacks.math.columbia.edu}},
    year         = {2025},
  }

@misc{ginot,
      title={Group actions on stacks and applications to equivariant string topology for stacks}, 
      author={Gregory Ginot and Behrang Noohi},
      year={2016},
      eprint={1206.5603},
      archivePrefix={arXiv},
      primaryClass={math.AT}
}

\end{document}